\date{}
\title{Subelliptic Resolvent Estimates for Non-self-adjoint Semiclassical Schr\"{o}dinger Operators}
\author{Ben Bellis\\Department of Mathematics\\UCLA\\Los Angeles, CA, 90095\\bbellis@math.ucla.edu}
\renewcommand{\(}{\left(}
\renewcommand{\)}{\right)}
\newtheorem{theorem}{Theorem}
\newtheorem{lemma}{Lemma}
\newtheorem{example}{Example}
\newtheorem*{remark}{Remark}
\newtheorem{cor}{Corollary}
\newcommand{\R}{\mathbb{R}}
\newcommand{\C}{\mathbb{C}}
\newcommand{\Sch}{\mathcal{S}}
\newcommand{\rr}{{\rm Re}\,}
\newcommand{\im}{{\rm Im}\,}
\begin{document}

\maketitle
\begin{abstract}
    In this paper we prove a subelliptic resolvent estimate for a broad class of semiclassical non-self-adjoint Schr\"{o}dinger
    operators with complex potentials when the spectral parameter is in a parabolic neighborhood of the imaginary axis. 
\end{abstract}

\section{Introduction}

Non-self-adjoint Schr\"{o}dinger operators can appear in a variety of settings. These settings can range physical problems to purely mathematical ones.
Such examples include the study of the Ginzburg-Landau equation in superconductivity \cite{ahelf}, \cite{henry}, the Orr-Somerfeld operator in fluid dynamics \cite{redp}, \cite{shk}, the theory of scattering resonances \cite{zw2}, or non-self-adjoint perturbations of self-adjoint operators \cite{h-s}. 
In the self-adjoint case, the spectral theorem provides a powerful tool to control the resolvent of Schr\"{o}dinger operators.
However, there is no suitable analog to this for non-self-adjoint operators. 

In this paper, we study semiclassical non-self-adjoint differential operators, and are thus concerned with the behavior of the 
resolvent as the semiclassical parameter $h$ tends towards $0$.
The general difficulty is that for non-self-adjoint semiclassical operators the spectrum does not control the resolvent, which may become very
large far away from the spectrum as $h\rightarrow 0$. 
By a theorem of Davies \cite{davies} and Dencker, Sj\"{o}strand, and Zworski \cite{dsz}, for a non-self-adjoint semiclassical Schr\"{o}dinger operator of the form
$P=-h^{2}\Delta+ V\(x\)$, for $V\in C^{\infty}\(\R^{n}\)$, and any $z$ of the
form $z=\xi_{0}^{2}+V\(x_{0}\)$ where $\(x_{0},\xi_{0}\)\in\R^{2n}$ and $\im \xi_{0}\cdot V'\(x_{0}\)\neq 0$,
$z$ is an ``almost eigenvalue" of $P$, in the sense that there exists a family of functions $u\(h\)\in L^{2}$ for which
$\|\(P-z\)u\(h\)\|_{L^2}=O\(h^{\infty}\)\|u\(h\)\|_{L^2}$. 
Thus, when $\rr V\geq 0$ we should not generally expect to have much control over the resolvent of such an operator in the interior of the right half-plane. 
So instead we will study resolvent estimates of such operators when the spectral parameter $z$ is near the boundary of this region.

In this paper we show that for a broad class of non-self-adjoint semiclassical Schr\"{o}dinger operators there is an unbounded parabolic region near the imaginary axis where the resolvent is well controlled. Let us now introduce the precise assumptions on our operators. 

Let $p \in C^{\infty}\(\R^{2n}\)$ be such that
\begin{equation}\label{eq:pdef}
p\(X\)=|\xi|^2 + V\(x\),
\end{equation} 
where $V=V_{1}+iV_{2}$ with $V_{1}, V_{2}$ real valued and $X=\(x,\xi\)$, with $x$, $\xi \in \R^{n}$.

We place the following conditions on the potential $V$:
\begin{equation}\label{eq:v1pos}V_{1}\(x\)\geq 0,\quad x\in\R^{n}\end{equation}
\begin{equation}\label{eq:vv'}|V_{2}\(x\)|\lesssim 1 + |V_{2}'\(x\)|^{2},\quad x\in \R^{n},\end{equation}
\begin{equation}\label{eq:vbd}\partial^{\alpha}V \in L^{\infty}\(\R^{n}\),\ |\alpha|\geq 2.\end{equation}
Here, and throughout the paper, we use the notation ``$f\lesssim g$'' to denote that there exists a constant $c>0$
such that $f\leq cg$. 
We define the Weyl quantization of a symbol $a\(x, \xi\)$ by
$$
a^{w}\(x, D_{x}\)u\(x\) = \int_{\mathbb{R}^{2n}} e^{2\pi i\(x-y\)\cdot\xi}a\(\frac{x+y}{2}, \xi\) u\(y\) dy d\xi
$$
and the semiclassical Weyl quantization by
$$
a^{w}\(x, hD_{x}\)u\(x\) = \int_{\mathbb{R}^{2n}} e^{2\pi i\(x-y\)\cdot\xi}a\(\frac{x+y}{2}, h\xi\) u\(y\) dy d\xi,
$$
where $0<h\leq 1$. Note that 
$$p^{w} = -\frac{h^{2}}{4\pi^{2}}\Delta + V\(x\).$$
We first prove the following a priori estimate for this operator.
\begin{theorem}\label{thm1}
For such $p$, let $T\geq 0$ be such that
\begin{equation}\label{eq:vv't}
|V_{2}\(x\)|-T\lesssim |V_{2}'\(x\)|^{2},\quad x\in \R^{n},
\end{equation}
and choose any $K\in \R$, $K>1$. Then there exist positive constants $h_{0}$, $A$, and $M$ such that for all $0<h<h_{0}$,
$z\in\textbf{C}$ with $|z|\geq KT+Mh$ and ${\rm Re}\, z\leq A h^{2/3} \(|z|-T\)^{1/3}$, and $u\in \mathcal{S}$,
$$
\left\|\(p^{w}\(x,hD_{x}\)-z\)u\right\|_{L^2}\gtrsim h^{2/3}\(|z|- T\)^{1/3}\|u\|_{L^2}.
$$
\end{theorem}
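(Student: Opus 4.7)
The approach is a positive commutator argument combined with a microlocal reduction to the complex Airy operator. Write $z=\alpha+i\beta$, set $\lambda:=|z|-T$ and $\mu:=h^{2/3}\lambda^{1/3}$, and decompose $p^w-z=A+iB$ into self-adjoint real and imaginary parts $A:=p_1^w-\alpha$ and $B:=V_2-\beta$, where $p_1:=|\xi|^2+V_1$. Since $V_1$ and $V_2$ commute as multiplication operators, a direct computation gives exactly $i[A,B]=(h/\pi)(\xi\cdot\nabla V_2)^w$, a self-adjoint pseudodifferential operator whose symbol is proportional to the Poisson bracket $\{p_1,p_2\}$ with $p_2:=V_2$. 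The identity
\[
\|(A+iB)u\|^2 = \|Au\|^2+\|Bu\|^2+\langle i[A,B]u,u\rangle
\]
reduces the theorem to a lower bound of $c\mu^2\|u\|^2$ on the right-hand side.

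The structural input is hypothesis (\ref{eq:vv't}). The case $\alpha\leq -c\mu$ is trivial from $A\geq -\alpha$, so one may assume $\alpha\in[-c\mu,A\mu]$. On the set $\{|V_2-\beta|\leq C\mu\}$ where $B$ fails to be elliptic at scale $\mu$, the hypotheses $|z|\geq KT+Mh$ (with $M$ large) and $|\alpha|\lesssim\mu$ yield $|\beta|\geq |z|-O(\mu)$ and hence $|V_2(x)|-T\geq \lambda/2$, so that $|\nabla V_2(x)|^2\gtrsim \lambda$ by (\ref{eq:vv't}). Equivalently the second Poisson bracket satisfies $\{p_2,\{p_1,p_2\}\}=-2|\nabla V_2|^2\leq -c\lambda$ throughout the bad region: this is a quantitative form of H\"ormander's double-bracket subelliptic condition with the correct $\lambda$-scaling.

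I would realize the $L^2$ lower bound via a phase-space partition. Off the bad set $\Omega:=\{p_1\leq\alpha+C\mu\}\cap\{|V_2-\beta|\leq C\mu\}$, either $A$ or $B$ is elliptic of size $\gtrsim\mu$, and a parametrix in a symbol class adapted to $p_1$ (uniform by (\ref{eq:vbd})) gives the estimate there. On $\Omega$, about any $x_0$ with $V_2(x_0)=\beta$, the anisotropic rescaling $y=h^{-2/3}\lambda^{1/6}(x-x_0)$, $\eta=h^{-1/3}\lambda^{-1/6}\xi$ turns $\mu^{-1}(p^w-z)$ into a perturbation of the one-dimensional complex Airy operator $-(4\pi^2)^{-1}\partial_y^2+i(\hat e\cdot y)-\alpha/\mu$ in the direction $\hat e:=\nabla V_2(x_0)/|\nabla V_2(x_0)|$, with the $V_1/\mu$ contribution nonnegative and quadratic Taylor remainders of relative size $o(1)$ by (\ref{eq:vbd}). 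The standard resolvent bound for the complex Airy, uniform so long as the shift $\alpha/\mu$ remains in a fixed compact set (which is the role of the constant $A$), then gives the local estimate; a locally finite cover of $\{V_2=\beta\}$ glues these into the full bound.

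The main obstacle is to implement the model reduction uniformly in $x_0$ and $\lambda$ and to absorb all Weyl-calculus remainders at the subelliptic rate $\mu^2=h^{4/3}\lambda^{2/3}$ rather than the naive $h^2$. The hypothesis $|z|\geq KT+Mh$ with $M\gg 1$ forces $\mu\gg h$ so that $h$-remainders are subordinate, while $\rr z\leq Ah^{2/3}\lambda^{1/3}$ restricts the region where $A$ is non-elliptic so that the double bracket dominates; (\ref{eq:vbd}) keeps all symbol and commutator estimates uniform in $x_0$.
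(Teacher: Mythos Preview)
Your strategy is genuinely different from the paper's. The paper never localizes or reduces to a model: it builds a single global bounded weight
\[
g(X)\;\sim\;\epsilon h^{-1/3}\,\frac{H_{\im p}\rr p}{\lambda(X)^{4/3}}\,\psi\!\left(\frac{4\rr p}{(h\lambda(X))^{2/3}}\right),
\qquad \lambda(X)^{2}=|\xi|^{2}+V_{1}+|V_{2}'|^{2},
\]
proves the pointwise subelliptic inequality $\rr p + hH_{\im p}g + C_{0}h \gtrsim h^{2/3}\lambda(X)^{2/3}$, and then runs a multiplier argument in the Wick quantization, pairing $(p-z)^{Wick}u$ against $(2-g)^{Wick}u$. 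Positivity of the Wick calculus gives the lower bound directly; the only residual term is a cutoff $\psi(B\lambda^{2}/y)$ supported where $\lambda^{2}\lesssim y:=|z|-T$, and the assumption \eqref{eq:vv't} is used exactly once, to show $|p-z|\gtrsim y$ on that support so the cutoff term can be absorbed. No partition of unity, no Airy model.

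Your outline is a recognizable subelliptic paradigm (phase-space partition, normal form, glue), and your identification of where \eqref{eq:vv't} enters is correct. But two points are real gaps rather than routine details. First, the expansion $\|(A+iB)u\|^{2}=\|Au\|^{2}+\|Bu\|^{2}+\langle i[A,B]u,u\rangle$ does not reduce anything by itself: the commutator symbol $\xi\cdot\nabla V_{2}$ is unbounded below, so you cannot drop that term, and the double-bracket condition you quote does not feed back into this identity without an auxiliary weight, which is exactly what the paper supplies and you do not. Second, and more seriously, the gluing over the level set $\{V_{2}=\beta\}$ is at the critical scale: balls of radius $h^{2/3}\lambda^{-1/6}$ force $|\nabla\chi_{j}|\sim h^{-2/3}\lambda^{1/6}$, and on your bad set $|\xi|\lesssim\mu^{1/2}$, so the localization error $h\,|\xi|\,|\nabla\chi_{j}|$ is of size $h\cdot h^{1/3}\lambda^{1/6}\cdot h^{-2/3}\lambda^{1/6}=\mu$, the same order as the estimate you seek. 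You flag this as ``the main obstacle'' but do not say how to beat it; since $\{V_{2}=\beta\}$ is in general noncompact (no ellipticity at infinity is assumed), you also need uniformity over infinitely many charts. The paper's global multiplier avoids both issues entirely, which is precisely its point.
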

We then use this to get a resolvent estimate on $L^2$.
\begin{theorem}\label{thm2}
For $p$ as above, $P$, the $L^2$-graph closure of $p^{w}\(x,hD_{x}\)$ on $\Sch$ is the maximal realization of $p^{w}\(x,hD_{x}\)$
    equipped with the domain $D_{max}=\left\{u\in L^{2} : p^{w}u\in L^{2}\right\}$. For $T$, $h$ and $z$ as above we have the resolvent estimate
    $$
    \left\| \(P-z\)^{-1} \right\|_{L^{2}\rightarrow L^{2}} \lesssim h^{-2/3}\(|z|- T\)^{-1/3}.
    $$
\end{theorem}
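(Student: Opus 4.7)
The plan is to deduce Theorem~\ref{thm2} from Theorem~\ref{thm1} by (i) identifying the domain of the graph closure with $D_{max}$ and (ii) combining the resulting a priori estimate on $D\(P\)$ with a duality argument to upgrade injectivity and closed range into a genuine two-sided inverse.

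For the domain identification, my approach is to show that $\mathcal{S}$ is a core for the maximal realization by approximating an arbitrary $u\in D_{max}$ by $u_{R,\epsilon}=\phi_{\epsilon}*\(\chi_{R}u\)$, where $\chi_{R}\(x\)=\chi\(x/R\)$ is a smooth spatial cutoff and $\phi_{\epsilon}$ is a standard mollifier. These approximants lie in $\mathcal{S}$, and writing $p^{w}=-\tfrac{h^{2}}{4\pi^{2}}\Delta+V$, the difference $p^{w}u_{R,\epsilon}-\phi_{\epsilon}*\(\chi_{R}p^{w}u\)$ splits into the commutator $[V,\phi_{\epsilon}*]\(\chi_{R}u\)$, which tends to zero as $\epsilon\to 0$ by \eqref{eq:vbd} together with the at most polynomial growth of $V$ implied by \eqref{eq:vv'}, plus a term involving $\nabla\chi_{R}\cdot\nabla u$ and $\(\Delta\chi_{R}\)u$. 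The latter requires an $L^{2}_{\mathrm{loc}}$ bound on $\nabla u$, and I expect this to be the main technical obstacle: since $V$ may be unbounded one cannot simply pair $p^{w}u$ with $u$ globally, but testing $p^{w}u\in L^{2}$ against $\chi_{R}^{2}u$ and taking real parts (using $V_{1}\geq 0$) should yield the needed $H^{1}_{\mathrm{loc}}$ regularity and close the approximation.

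With the core property in hand, Theorem~\ref{thm1} extends by density in the graph norm to all $u\in D\(P\)$, so that $P-z$ is injective with closed range. For surjectivity I turn to duality. The formal $L^{2}$-adjoint of $p^{w}$ is $\overline{p}^{w}$ with $\overline{p}=|\xi|^{2}+\overline{V}$ and $\overline{V}=V_{1}-iV_{2}$; the hypotheses \eqref{eq:v1pos}--\eqref{eq:vv't} are invariant under $V_{2}\mapsto -V_{2}$, and the conditions on $z$ in Theorem~\ref{thm1} depend only on $|z|$ and $\rr z$, hence on $\overline{z}$ in the same way. So Theorem~\ref{thm1} applies equally to $\overline{p}^{w}-\overline{z}$, and the symmetric core argument identifies $P^{*}$ with the maximal realization of $\overline{p}^{w}$, yielding $\ker\(P^{*}-\overline{z}\)=\{0\}$. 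Hence $P-z$ has dense range; combined with the closed range statement this gives that $P-z\colon D\(P\)\to L^{2}$ is bijective, and the already-established bound on $\|u\|_{L^{2}}$ in terms of $\|\(P-z\)u\|_{L^{2}}$ is exactly the claimed resolvent estimate.
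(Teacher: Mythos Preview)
Your overall strategy matches the paper's: extend Theorem~\ref{thm1} to the closure by a core argument, then run the same estimate for $\overline{p}^{w}-\overline{z}$ to get injectivity of the adjoint and hence surjectivity. The duality step is carried out exactly as in the paper.

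The genuine difference is in the core argument. The paper works in phase space: it takes $\chi_{\delta}=\bigl(\phi(\delta x)\phi(\delta\xi)\bigr)^{KN}$, computes $[P,\chi_{\delta}]$ by Kohn--Nirenberg symbol calculus, shows the commutator is uniformly bounded on $L^{2}$, and then checks convergence to zero on the dense set $\{u:\mathcal{F}u\in C_{c}^{\infty}\}$. Your approach is purely configuration-space: spatial cutoff plus mollification, with the commutator with $V$ handled by Friedrichs and the cutoff commutator with $-\Delta$ handled by an $H^{1}$ bound coming from the energy identity $\rr\bigl(p^{w}u,\chi_{R}^{2}u\bigr)$ and $V_{1}\ge 0$. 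This is more elementary (no pseudodifferential calculus needed) and exploits the explicit Schr\"odinger form $-\tfrac{h^{2}}{4\pi^{2}}\Delta+V$; the paper's method is more in the spirit of the rest of the article and would transfer more readily to general symbols. One small imprecision in your sketch: what you actually need to send the $\nabla\chi_{R}\cdot\nabla u$ term to zero as $R\to\infty$ is not merely $\nabla u\in L^{2}_{\mathrm{loc}}$ (which already follows from local elliptic regularity, since $\Delta u\in L^{2}_{\mathrm{loc}}$), but a bound uniform in $R$; your proposed pairing against $\chi_{R}^{2}u$ does give the global bound $\nabla u\in L^{2}$, so the argument closes.
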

\begin{remark}
For such $P$, we have that $P$ is accretive because
\[\rr\(Pu,u\)_{L^2}=\(\(-\frac{h^{2}}{4\pi^{2}}\Delta + V_{1}\)u,u\)_{L^{2}}\geq 0,\quad u\in D_{max}.\]
Thus Theorem \ref{thm2} implies that $P$ is maximally accretive.
\end{remark}
\begin{figure}[h]
\centering
\includegraphics[width=.4\textwidth]{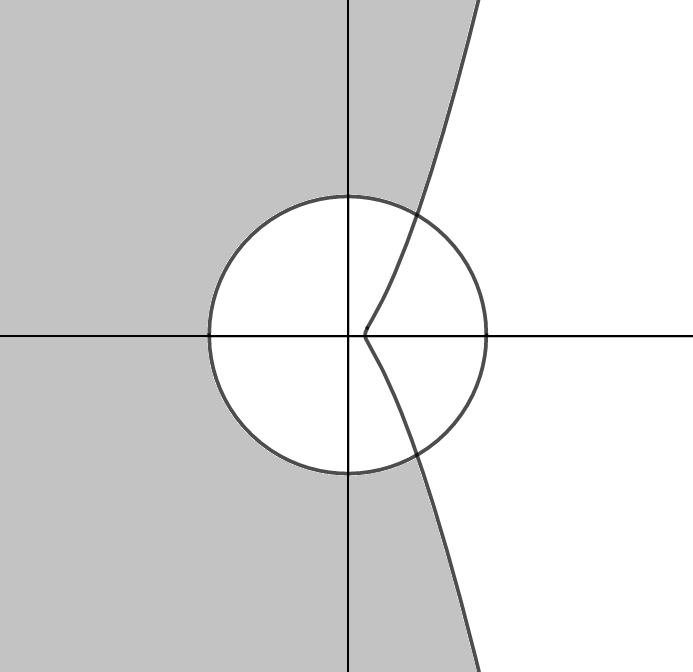}
\caption{The shaded region indicates the values of $z$ for which the Theorems \ref{thm1} and \ref{thm2} apply.}
\label{zregion}
\end{figure}

Similar resolvent estimates have been attained for different classes of semiclassical non-self-adjoint operators.
Herau, Sj\"{o}strand, and Stolk proved a similar resolvent estimate for the Kramers-Fokker-Planck operator under certain conditions \cite{hss}. We use a multiplier method inspired by one used in \cite{hss}, but our proof proceeds quite differently. Theirs uses the FBI transform in a compact region of phase space and and Weyl-H\"{o}rmander calculus with a suitable metric near infinity, while ours works globally using the Wick quantization and some standard Weyl calculus. Hitrik and Sj\"{o}strand attained a similar estimate for certain one-dimensional non-self-adjoint Schr\"{o}dinger operators \cite{h-s}, with ellipticity assumptions on the potential. 
Also, Dencker, Sj\"{o}strand, and Zworski showed that for non-self-adjoint semiclassical operators, under suitable assumptions including ellipticity at infinity, the resolvent can be similarly estimated
in a small region near a boundary point of the range of the symbol, away from critical values of the symbol \cite{dsz}. 
What distinguishes our result, in addition to the relatively direct proof, is that we have fairly loose conditions on the potential, with no requirement of ellipticity, and
we attain a resolvent estimate for $z$ in an unbounded region.

To demonstrate the applicability of this result, here are some examples of cases where it can be used.
\begin{example}

Let $V\(x\)=q\(x\)$ for $q$ any quadratic form with $\rr q\(x\)\geq 0$. By diagonalization we can see that
$|\im q\(x\)|\lesssim |\im q'\(x\)|^{2}$, and $q''$ is constant so we can apply the above theorems to $p=|\xi|^{2}+ q\(x\)$ with $T=0$. 
Thus for some $h_{0}$, $A$, and $M$,
$$\left\|\(-\frac{h^{2}}{4\pi^{2}}\Delta + q\(x\) - z\)^{-1}\right\|_{L^2\rightarrow L^2}\lesssim h^{-2/3}|z|^{-1/3},$$
for all $z\in\textbf{C}$ with $|z|>Mh$ and ${\rm Re}\, z\leq A h^{2/3}|z|^{1/3}$ and $0<h\leq h_{0}$.

\end{example}
We can apply these theorems to many other classes of potentials. Note that the condition $|V_{2}\(x\)|-T\lesssim |V_{2}'\(x\)|^{2}$ implies that $T$ will be at least as large as the maximum absolute value of a critical value of $V_{2}$. 
\begin{example}

Let $V\in C^{\infty}\(\R^{2}\)$ be given by $V\(x_{1},x_{2}\)=ix_{1}^2 + isin\(x_{2}\)$.
Then $|V\(x_{1},x_{2}\)|-1 \lesssim |V'\(x_{1},x_{2}\)|^2$ so applying the above to $p=|\xi|^{2}+V$ with $T=1$ and any $K>1$ yields, for some $h_{0}$, $A$, and $M$,
$$\left\|\(-\frac{h^{2}}{4\pi^{2}}\Delta + i\(x_{1}^{2}+sin\(x_{2}\)\)-z\)^{-1}u\right\|_{L^2}\lesssim h^{-2/3}\(|z|-1\)^{-1/3},$$ 
for all $z\in\textbf{C}$ with $|z|>K + Mh$ and ${\rm Re}\, z\leq A h^{2/3}\(|z| - 1\)^{1/3}$ and $0<h\leq h_{0}$.
\end{example}
For a broader example we also have the following:
\begin{example}
Let $V_{2}\in C^{\infty}\(\R^{n};\R\)$ be a Morse function with finitely many critical points that satisfies \eqref{eq:vbd}. Furthermore 
suppose that $|V_{2}'\(x\)|\gtrsim |x|$ for all $x\in\R^{n}$ with $|x| > R$ for some $R > 0$. Let $x_{1},... x_{N}\in\R^{n}$ be the critical points of V,
and let $T=\max\limits_{1\leq j\leq N}|V_{2}\(x_{j}\)|$. Since
$V_{2}$ is Morse, in a neighborhood of each $x_j$, $V_{2}\(x\)=V_{2}\(x_{j}\)+q_{j}\(x-x_{j}\)+O\(|x-x_{j}|^{3}\)$ 
for some nondegenerate quadratic form $q_{j}$.
So $V_{2}'\(x\)= q'_{j}\(x-x_{j}\) + O\(|x-x_{j}|^{2}\)$ and $|q'_{j}\(x-x_{j}\)|\sim |x-x_{j}|$.
Then, locally near $x_{j}$ we have
$$|V_{2}\(x\)|-T\leq |q_{j}\(x-x_{j}\)|+O\(|x-x_{j}|^{3}\)$$
$$\lesssim |x-x_{j}|^{2}\lesssim |V_{2}'\(x\)|^{2}.$$
Thus $|V_{2}\(x\)|-T\lesssim |V_{2}'\(x\)|^{2}$ in a neighborhood of each critical point.
For $x$ away from critical points and $|x|\leq R$, $|V_{2}'\(x\)|$ is bounded below away from 0 and $|V_{2}\(x\)|$ is bounded above, so $|V_{2}\(x\)|-T\lesssim |V_{2}'\(x\)|^{2}$ here as well.
Lastly \eqref{eq:vbd} implies that $|V_{2}\(x\)|\lesssim 1 + |x|^{2}$ so $|V_{2}\(x\)|\lesssim |V_{2}'\(x\)|^{2}$ for $|x|>R$, and we see that the preceding theorems can be applied to $p=|\xi|^{2} +V_{1}\(x\) + iV_{2}\(x\)$ for any such $V_{2}$ and any $V_{1}$ satisfying \eqref{eq:v1pos} and \eqref{eq:vbd}.
\end{example}

The plan of the paper is as follows. In Section 2 we will construct a bounded weight function $g$ to be used in proving Theorem \ref{thm1}. 
Then in Section 3 we will provide a brief overview of the Wick quantization. In Section 4 we prove Theorem \ref{thm1} by using 
the weight function as a bounded multiplier to prove an
estimate for the Wick quantization of $p$ and use the relationship between the Wick and Weyl quantizations as well as some Weyl symbol calculus to
get the desired estimate. In Section 5 we prove Theorem \ref{thm2} by showing the estimate from Theorem \ref{thm1} can be extended to the maximal
domain of $P$.
In Section 6, we show how the preceding proofs can be modified to prove a similar result for a larger class of potential functions if we additionally require that $|z|$ be bounded above.

\section{The Weight Function}
Let 
$$\lambda\(X\):=\(|\xi|^{2}+V_{1}\(x\)+|V'_{2}\(x\)|^{2}\)^{1/2}.$$
It is worth noting that for this $p$ we have that
$$
\lambda\(X\)^{2} \lesssim {\rm Re}\, p + H^{2}_{{\rm Im}\, p}{\rm Re}\, p \lesssim \lambda\(X\)^{2}, 
$$
because this motivates our choice of weight function. 
Here, for $f\in C^1\(\mathbb{R}^{2n}\)$, we use the notation $H_f$ to denote the Hamiltonian vector field of $f$, i.e. given 
$f\(x,\xi\),\, g\(x,\xi\)\in C^{1}\(\mathbb{R}^{2n}\)$ we define
$$
H_{f}g= \left\{ f,g \right\} = 
\partial_{\xi}f\cdot\partial_{x}g - \partial_{x}f\cdot\partial_{\xi}g.
$$

\begin{lemma}\label{lemma1}
Let $p \in C^{\infty}\(\R^{2n}\)$ be given by $p\(x,\xi\)=|\xi|^{2}+V\(x\)$ 
with $V=V_{1}+iV_{2}$, $V_{1},V_{2}$ real valued, $V''\in L^{\infty}$, and $V_{1}\geq 0$.
Let $\psi\in C_{c}^{\infty}\(\R;[0,1]\)$ be a cutoff with $\psi\(t\)=1$ for $|t|\leq 1$ and $\psi\(t\)=0$ for $|t|\geq 2$. 

There exist $0<\epsilon<1$ and  $0< h_{0}\leq 1$ depending on $p$ 
such that for all $0< h\leq h_{0}$ and $X$ with $\lambda\(X\)\geq h^{1/2}$, the smooth weight 
function $G$ given by 
$$G\(X\) =\epsilon h^{-1/3}\frac{H_{\im p}\rr p}{\lambda\(X\)^{4/3}}\psi\(\frac{4\rr p}{\(h \lambda\(X\)\)^{2/3}}\)$$
satisfies 
\begin{equation}\label{eq:con1}|G\(X\)|=O\(\epsilon \), \end{equation}
\begin{equation}\label{eq:con2}|G'\(X\)| = O\(\epsilon h^{-1/2}\),\end{equation}
and 
\begin{equation}\label{eq:ineq}{\rm Re}\,  p\(X\) + hH_{{\rm Im}\, p} G\(X\) \gtrsim h^{2/3}\lambda\(X\)^{2/3}.\end{equation}
\end{lemma}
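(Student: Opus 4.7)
The plan is to check the three properties of $G$ separately, relying on a handful of elementary identities:
\[
H_{\im p}\rr p = -2\xi\cdot V_{2}'\(x\), \qquad H_{\im p}^{2}\rr p = 2|V_{2}'\(x\)|^{2}, \qquad |V_{2}'\(x\)|^{2} = \lambda\(X\)^{2} - \rr p\(X\).
\]
Throughout I also use the Landau-type inequality $|V_{1}'|^{2}\lesssim V_{1}$, which follows from $V_{1}\geq 0$ and $V_{1}''\in L^{\infty}$, together with the trivial bounds $|V_{2}'|\leq\lambda$, $|\lambda'|=O\(1\)$, and $|\(\rr p\)'|\lesssim \(\rr p\)^{1/2}$.

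The $L^{\infty}$-bound \eqref{eq:con1} is straightforward: the cutoff $\psi\(4\rr p/\(h\lambda\)^{2/3}\)$ is supported where $\rr p\leq\(h\lambda\)^{2/3}/2$; since $|\xi|^{2}\leq\rr p$ this forces $|\xi|\lesssim\(h\lambda\)^{1/3}$ on the support, so $|H_{\im p}\rr p|\lesssim|\xi||V_{2}'|\lesssim h^{1/3}\lambda^{4/3}$ and hence $|G|\lesssim\epsilon$. For \eqref{eq:con2} I split the derivative of $G=\epsilon h^{-1/3}\(H_{\im p}\rr p\)\lambda^{-4/3}\psi$ into three pieces. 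The two in which the derivative hits the numerator or the factor $\lambda^{-4/3}$ are each $O\(\epsilon h^{-1/3}\lambda^{-1/3}\)$ on the support of $\psi$, hence $O\(\epsilon h^{-1/2}\)$ by $\lambda\geq h^{1/2}$. The piece in which it hits $\psi$ is controlled by a chain-rule bound $|\partial\{\rr p/\(h\lambda\)^{2/3}\}|\lesssim \(h\lambda\)^{-1/3}$, using $|\(\rr p\)'|\lesssim \(h\lambda\)^{1/3}$ on the support; this is $\leq h^{-1/2}$ and combines with the prefactor $O\(\epsilon\)$ to stay within budget.

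The heart of the argument is \eqref{eq:ineq}, which I verify by splitting phase space into three regions based on $\rr p/\(h\lambda\)^{2/3}$. On $\{\rr p\geq\(h\lambda\)^{2/3}/2\}$ the function $G$ vanishes in a neighborhood, so $hH_{\im p}G=0$ and $\rr p\geq h^{2/3}\lambda^{2/3}/2$ suffices. On $\{\rr p\leq\(h\lambda\)^{2/3}/4\}$ the cutoff is identically $1$ locally, and applying $H_{\im p}=-V_{2}'\cdot\partial_{\xi}$ twice produces
\[
hH_{\im p}G = \epsilon h^{2/3}\(\frac{2|V_{2}'|^{2}}{\lambda^{4/3}}-\frac{8}{3}\frac{\(\xi\cdot V_{2}'\)^{2}}{\lambda^{10/3}}\).
\]
Cauchy-Schwarz bounds the error by $\tfrac{8}{3}\(|\xi|^{2}/\lambda^{2}\)|V_{2}'|^{2}/\lambda^{4/3}$, and in this region $|\xi|^{2}/\lambda^{2}\leq 1/2$ because $\lambda\geq h^{1/2}$, so the principal term wins; invoking $|V_{2}'|^{2}=\lambda^{2}-\rr p\geq\lambda^{2}/2$ then gives $hH_{\im p}G\gtrsim\epsilon h^{2/3}\lambda^{2/3}$.

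The main obstacle is the intermediate zone $\(h\lambda\)^{2/3}/4\leq\rr p\leq\(h\lambda\)^{2/3}/2$, where $\psi'\neq 0$ introduces an extra error from $H_{\im p}\psi$. The principal term $H_{\im p}\(H_{\im p}\rr p/\lambda^{4/3}\)\cdot\psi$ remains nonnegative by the same cancellation as before. For the new error, using $|H_{\im p}\rr p|/\lambda^{4/3}\lesssim h^{1/3}$, $|V_{2}'|\leq\lambda$, and $|\partial_{\xi}\psi|\lesssim\(h\lambda\)^{-1/3}$ on the support yields a crude bound of size $\lambda^{2/3}$ before the $\epsilon h^{2/3}$ prefactor, i.e.\ a total contribution $O\(\epsilon h^{2/3}\lambda^{2/3}\)$. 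Since $\rr p\geq h^{2/3}\lambda^{2/3}/4$ in this zone, choosing $\epsilon$ sufficiently small absorbs the error into $\rr p$, producing $\rr p+hH_{\im p}G\gtrsim h^{2/3}\lambda^{2/3}$ throughout.
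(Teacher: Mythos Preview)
Your argument is correct and follows the same approach as the paper's proof: the bounds for \eqref{eq:con1} and \eqref{eq:con2} use exactly the same ingredients (the Landau inequality $|V_1'|^2\lesssim V_1$, the support constraint $|\xi|\lesssim(h\lambda)^{1/3}$, and $|\lambda'|=O(1)$), and for \eqref{eq:ineq} you carry out the identical explicit computation of $H_{\im p}G$ where $\psi\equiv 1$. The only cosmetic difference is that the paper merges your outer region $\{\rr p\geq(h\lambda)^{2/3}/2\}$ with the transition zone and handles them together by crudely bounding $|hH_{V_2}G|=O(\epsilon(h\lambda)^{2/3})$ throughout $\{\rr p\geq(h\lambda)^{2/3}/4\}$, whereas you separate them and in the transition zone retain the nonnegative principal part while estimating only the $\psi'$ contribution; both routes lead to the same absorption by $\rr p$ for $\epsilon$ small.
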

\begin{proof}
The support of $G$ is contained in the region where $|\xi|^{2}\leq\frac{1}{2}\(h \lambda\(X\)\)^{2/3}$, so we see that since $\psi \leq 1$ we have 

\[|G\(X\)| \leq \epsilon h^{-1/3}\frac{2|V_{2}'\(x\)||\xi|}{\lambda\(X\)^{4/3}}\psi\(\frac{\rr p\(X\)}{\(h \lambda\(X\)\)^{2/3}}\)\]
\begin{equation}
\lesssim \epsilon h^{-1/3}\frac{\lambda\(X\)\(h \lambda\(X\)\)^{1/3}}{\lambda\(X\)^{4/3}}
\lesssim \epsilon,  
\end{equation}
which verifies that $G$ satisfies \eqref{eq:con1}. 
Note that as $V_{1}''\in L^{\infty}$ and $V_{1}\geq 0$ we have, using a standard inequality (Lemma 4.31 of \cite{zw}), that 
\begin{equation}\label{eq:v1'v1}
\left|V_{1}'\(x\)\right|\lesssim V_{1}\(x\)^{1/2}.
\end{equation}
This and \eqref{eq:vbd} then imply that
\begin{equation}\label{eq:lambda'}
\partial^{\alpha}\lambda^{2}=O\(\lambda\),\quad |\alpha|= 1.
\end{equation}

Now, to check \eqref{eq:con2}, 
one can use \eqref{eq:vbd}, \eqref{eq:v1'v1}, \eqref{eq:lambda'}, 
and the fact that $|\xi|\lesssim\(h \lambda\(X\)\)^{1/3}$ on the support of $G$ to get the following estimates on the support of $G$:
\begin{equation}\label{eq:est1} \left| \frac{H_{V_{2}}|\xi|^{2}}{\lambda\(X\)^{4/3}}\right| = O\(h^{1/3}\), \end{equation}
\begin{equation}\label{eq:est2}\left|\partial^{\alpha} \frac{H_{V_{2}}|\xi|^{2}}{\lambda\(X\)^{4/3}}\right| = 
O\(\lambda\(X\)^{-1/3}\) = O\(h^{-1/6}\) ,\quad  |\alpha|=1,\end{equation}
\begin{equation}\label{eq:est4}\left|\partial^{\alpha}\(\psi\(\frac{4\(|\xi|^2+V_{1}\(x\)\)}{\(h \lambda\(X\)\)^{2/3}}\)\)\right| = 
O\(\frac{|\xi|+|V_{1}'\(x\)|}{\(h\lambda\(X\)\)^{2/3}} + \lambda\(X\)^{-1}\) \end{equation}
\[= O\(\(h\lambda\(X\)\)^{-1/3}+\lambda\(X\)^{-1}\) = O\(h^{-1/2}\),\quad  |\alpha|=1.\]
Thus by \eqref{eq:est1}, \eqref{eq:est2}, and \eqref{eq:est4},
$$|G'\(X\)| \lesssim \epsilon h^{-1/3}\( O\(h^{-1/6}\) + O\(h^{1/3} h^{-1/2}\) \) = O\(\epsilon h^{-1/2}\),$$
which verifies \eqref{eq:con2}. 

Now we shall attain \eqref{eq:ineq} in the case where $|\xi|^{2}+V_{1}\(x\)
\leq\frac{1}{4}\(h \lambda\(X\)\)^{2/3}\leq \frac{1}{4}\lambda\(X\)^2$, 
and so $|V_{2}'\(x\)|^{2}\geq \frac{3}{4}\lambda\(X\)^2$.
In this region $\psi\(\frac{4 \rr p}{\(h\lambda\(X\)\)^{2/3}}\)\equiv 1$, and so $G\(X\) =\epsilon h^{-1/3}\frac{H_{V_2}|\xi|^{2}}{\lambda\(X\)^{4/3}}$.
Now we get
\begin{equation}\label{eq:hgv}
H_{V_2}G=\epsilon h^{-1/3}\(\frac{2|V_{2}'\(x\)|^{2}}{\lambda\(X\)^{4/3}}- \frac{8\(V_{2}'\(x\)\cdot \xi\)^{2}}{3 \lambda\(X\)^{10/3}}\).
\end{equation}
Thus 
$$
{\rm Re}\,  p\(X\) + h H_{{\rm Im}\, p} G\(X\) = \rr p\(X\) +\epsilon h^{2/3}\(\frac{2|V_{2}'\(x\)|^{2}}{\lambda\(X\)^{4/3}} 
-\frac{8\(V_{2}'\(x\)\cdot\xi\)^{2}}{3 \lambda\(X\)^{10/3}}\)$$
$$\geq \rr p\(X\) + \epsilon h^{2/3}\(\frac{2|V_{2}'\(x\)|^2}{\lambda\(X\)^{4/3}} - \frac{2|V_{2}'\(x\)|^{2}}{3 \lambda\(X\)^{4/3}}\)
\geq \epsilon h^{2/3}\frac{4|V_{2}'\(x\)|^2}{3 \lambda\(X\)^{4/3}}$$
$$\gtrsim \epsilon h^{2/3}\left|V_{2}'\(x\)\right|^{2}\lambda\(X\)^{-4/3}\gtrsim \epsilon h^{2/3}\lambda\(X\)^{2/3}.$$
It remains to show the bound in the region where $|\xi|^2 + V_{1}\(x\) \geq \frac{1}{4}\(h \lambda\(X\)\)^{2/3}$.
Using \eqref{eq:est1}, \eqref{eq:est2}, and \eqref{eq:est4} we get that 
\begin{multline*}
|hH_{V_2}G|\leq\epsilon h^{2/3}\lambda\(X\) O\(\lambda\(X\)^{-1/3} \) \\
+ \epsilon h^{2/3}\lambda\(X\)O\(h^{1/3}\(\(h \lambda\(X\)\)^{-1/3} + \lambda\(X\)^{-1}\)\)\\
=O\(\epsilon \(h\lambda\(X\)\)^{2/3}\).
\end{multline*}
Here, fixing $\epsilon$ sufficiently small yields
$$|\xi|^{2} +V_{1}\(x\) + hH_{V_2}G \gtrsim h^{2/3}\lambda\(X\)^{2/3} - O\(\epsilon  h^{2/3}\lambda\(X\)^{2/3}\) \gtrsim  h^{2/3}\lambda\(X\)^{2/3}.$$
This completes the proof of the lemma.
\end{proof}
\begin{cor}\label{cor1}
For such $p$ as above, there exists a bounded real weight function $g\in C^{\infty}\(\R^{2n}\)$ and constants $C_{0}, h_{0}>0$ such that for
all $0<h\leq h_{0}$ and all $X\in \R^{2n}$
$|g\(X\)|\leq 1$, $|g'\(X\)|=O\(h^{-1/2}\)$ and
\begin{equation}\label{eq:sub}
\rr p\(X\) + h H_{\im p}g\(X\)+C_{0}h\gtrsim h^{2/3}\lambda\(X\)^{2/3}.
\end{equation}
\end{cor}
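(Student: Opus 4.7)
The plan is to extend the function $G$ from Lemma \ref{lemma1}, which is controlled only on $\{\lambda(X)\ge h^{1/2}\}$, to a smooth function on all of $\R^{2n}$ by multiplying by a cutoff that vanishes on the bad set, and then to absorb the loss in the small-$\lambda$ region into the additive $C_0 h$ term. The crucial observation is that $h^{2/3}\lambda(X)^{2/3}\lesssim h$ whenever $\lambda(X)^2\lesssim h$, so \eqref{eq:sub} becomes essentially automatic once $g$ is turned off in that region, provided $\rr p\ge 0$ (which holds because $V_1\ge 0$).

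Concretely, I would fix $\chi\in C^\infty(\R;[0,1])$ with $\chi(t)=0$ for $t\le 1$ and $\chi(t)=1$ for $t\ge 2$, and define
\[g(X):=\chi(\lambda(X)^2/h)\,G(X),\]
extended by $g\equiv 0$ on $\{\lambda(X)^2<h\}$; since $\chi$ vanishes on a neighborhood of the set where $G$ is not defined, $g\in C^\infty(\R^{2n})$. The pointwise bound $|g|\le 1$ is obtained by shrinking $\epsilon$ in Lemma \ref{lemma1} until $|G|\le 1$. For the derivative bound, the product rule gives
\[g'=\chi'(\lambda^2/h)\,h^{-1}\,\nabla(\lambda^2)\,G+\chi(\lambda^2/h)\,G',\]
and on the support of $\chi'$ we have $\lambda^2\sim h$, so by \eqref{eq:lambda'} $|\nabla(\lambda^2)|=O(\lambda)=O(h^{1/2})$, making both summands $O(h^{-1/2})$.

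To establish \eqref{eq:sub} I would split into three regimes. On $\{\lambda^2\ge 2h\}$ we have $g=G$ and Lemma \ref{lemma1} applies directly, without any need for $C_0 h$. On $\{\lambda^2\le h\}$ we have $g\equiv 0$; the assumption $V_1\ge 0$ forces $\rr p\ge 0$, and $h^{2/3}\lambda^{2/3}\le h$, so any sufficiently large $C_0$ yields \eqref{eq:sub}. The main obstacle is the transition zone $\{h\le\lambda^2\le 2h\}$, where I would expand
\[hH_{V_2}g=\chi'(\lambda^2/h)\,H_{V_2}(\lambda^2)\,G+h\,\chi(\lambda^2/h)\,H_{V_2}G\]
and estimate each term. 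For the first, $|H_{V_2}(\lambda^2)|=|2V_2'(x)\cdot\xi|\le\lambda^2\lesssim h$ and $|G|=O(\epsilon)$, so the term is $O(\epsilon h)$. For the second, $|H_{V_2}G|\le|V_2'|\cdot|\partial_\xi G|\lesssim\lambda\cdot\epsilon h^{-1/2}=O(\epsilon)$ (using $|V_2'|\le\lambda\lesssim h^{1/2}$ and $|G'|=O(\epsilon h^{-1/2})$), giving $h\cdot O(\epsilon)=O(\epsilon h)$. Since $\rr p\ge 0$ and $h^{2/3}\lambda^{2/3}\lesssim h$ on this zone as well, choosing $C_0$ sufficiently large absorbs the $O(h)$ error and yields \eqref{eq:sub}.
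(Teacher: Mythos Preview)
Your proof is correct and follows essentially the same approach as the paper: multiply $G$ by a smooth cutoff in $\lambda(X)^2/h$ to kill it where $\lambda$ is small, then use $\rr p\ge 0$ and the fact that $h^{2/3}\lambda^{2/3}\lesssim h$ in the small-$\lambda$ region to absorb all errors into $C_0 h$. The paper uses $g=(1-\psi(2\lambda^2/h))G$ (transition zone $h/2\le\lambda^2\le h$) while you use $g=\chi(\lambda^2/h)G$ (transition zone $h\le\lambda^2\le 2h$), and in the transition zone the paper simply invokes $|g'|=O(h^{-1/2})$ to get $|hH_{V_2}g|\lesssim h^{1/2}\lambda=O(h)$, whereas you expand the product rule and track the $\epsilon$; these are cosmetic differences. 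One minor wording issue: $G$ is actually given by a formula that is smooth on all of $\{\lambda>0\}$, so the set on which you need $\chi$ to vanish is a neighborhood of $\{\lambda=0\}$, not ``the set where $G$ is not defined''; your cutoff does this since $\chi(\lambda^2/h)=0$ for $\lambda^2\le h$.
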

\begin{proof}
Let $G$ be a weight function for $p$ as constructed in Lemma \ref{lemma1}, and set $\epsilon$ small enough that $|G|\leq 1$.
Now we extend $G$ to all of $\R^{2n}$ by defining 
$$g\(X\) = \(1-\psi\(\frac{2 \lambda\(X\)^2}{h}\)\)G\(X\),$$
where $\psi\in C_{c}^{\infty}\(\R;[0,1]\)$, $\psi\(t\)=1$ for $|t|\leq 1$, $\psi\(t\)=0$ for $|t|\geq 2$, as before. 
By \eqref{eq:con2} and \eqref{eq:lambda'}, 
\begin{equation}\label{eq:ngh}
|g'| \lesssim \frac{ \lambda\(X\)}{h}\left|\psi'\(\frac{2 \lambda\(X\)^2}{h}\)\right|\left|G\(X\)\right| + 
\(1-\psi\(\frac{2 \lambda\(X\)^2}{h}\)\)|G'\(X\)|\end{equation}
$$\lesssim h^{-1/2}.$$
By Lemma \ref{lemma1}, \eqref{eq:sub} holds in the region where $\lambda\(X\)>h^{1/2}$ 
for $h$ sufficiently small since $g=G$ there.
When $\lambda\(X\)<\frac{1}{2}h^{1/2}$ we have $H_{V_{2}}g\(X\)=0$ and 
$h^{2/3}\lambda\(X\)^{2/3}<h$ so the inequality holds
in this region as well.
When $\frac{1}{2}h^{1/2}\leq \lambda\(X\)\leq h^{1/2}$, using \eqref{eq:ngh} we get
$$
|\xi|^{2} +V_{1}\(x\) +h H_{V_{2}}g\(X\)+C_{0}h\geq C_{0}h - O\(h^{1/2}\lambda\(X\)\) \gtrsim h^{2/3}\lambda\(X\)^{2/3},
$$
for $C_{0}$ sufficiently large.
\end{proof}
\section{Wick quantization overview}
Before proving Theorem \ref{thm1} we first will note some facts about the Wick quantization.
For $Y = \(y, \eta\) \in \mathbb{R}^{2n}$ and $x\in \mathbb{R}^{n}$ define 
$$\phi_{Y}\(x\)= 2^{n/4} e^{-\pi |x-y|^2} e^{2\pi i \eta\cdot\(x-y\)}.$$
Then for  $u\in L^{2}{\(\mathbb{R}^n\)}$ define the wave packet transform of $u$ by
$$W u\(Y\) = \(u, \phi_{Y}\),$$
where $\(\cdot , \cdot \)$ denotes the $L^{2}$ scalar product.
As proven in \cite{wicketc}, $W$ is an isometry from $L^{2}\(\mathbb{R}^n\)$ to $L^{2}\(\mathbb{R}^{2n}\)$
and continuous from $\mathcal{S}\(\mathbb{R}^{n}\)$ to $\mathcal{S}{\(\mathbb{R}^{2n}\)}$.
The function $\phi_Y$ is $L^2$ normalized, so the rank-one orthogonal projection of 
$u$ onto $\phi_{Y}$ is given by
$$
\Pi_{Y}u = \(u, \phi_{Y}\)\phi_{Y}.
$$
For a symbol $a\(x,\xi\)\in L^{\infty}\(\mathbb{R}^{2n}\)$
the Wick quantization of $a$ is given by 
\begin{equation}\label{eq:wickdef}
a^{Wick} = W^{*}a^{\mu}W,
\end{equation}
where $a^{\mu}$ denotes multiplication by $a$ and $W^*:L^{2}\(\R^{2n}\)\rightarrow L^{2}\(\R^{n}\)$ is the adjoint of $W$, or equivalently
$$
a^{Wick} = \int_{\mathbb{R}^{2n}}a\(Y\)\Pi_{Y} dY.
$$
We can see from \eqref{eq:wickdef} that for $a\in L^{\infty}\(\mathbb{R}^{2n}\)$ 
then $a^{Wick}$ is a bounded operator on $L^{2}\(\mathbb{R}^{n}\)$ with 
\begin{equation}\label{eq:l2bd}
\|a^{Wick}\|_{L^{2}\rightarrow L^{2}} \leq \|a\|_{L^{\infty}}    
\end{equation}
and that 
\begin{equation}\label{eq:wadj}
\(a^{Wick}\)^{*}=\(\overline{a}\)^{Wick}.
\end{equation}
More generally we can define the Wick quantization for symbols in the space of tempered distributions,
$\mathcal{S}'\(\mathbb{R}^{2n}\)$.
For $a\in\mathcal{S}'\(\mathbb{R}^{2n}\)$, $a^{Wick}$ is a map from $\mathcal{S}\(\mathbb{R}^{n}\)$
to $\mathcal{S}'\(\mathbb{R}^{n}\)$ defined by
$$
a^{Wick}u\(\overline{v}\)=a\(Wu \overline{Wv}\),
$$
for $u,v\in\mathcal{S}\(\mathbb{R}^{n}\)$.
As long as the symbol $a\in L^{\infty}_{loc}$ satisfies $\left|a\(X\)\right| \lesssim \(1+|X|\)^{N}$ for some $N$ then
$a^{\mu}$ is continuous as a map from $\mathcal{S}{\(\mathbb{R}^{2n}\)}$ 
to $L^{2}\(\mathbb{R}^{2n}\)$, and thus \eqref{eq:wickdef} implies that $a^{Wick}$ is continuous 
from $\mathcal{S}{\(\mathbb{R}^{n}\)}$ to $L^{2}\(\mathbb{R}^{n}\)$.
Furthermore, we have that for such symbols $a$ and $u\in \mathcal{S}\(\mathbb{R}^{n}\)$
\begin{equation}\label{eq:pos}a\geq 0 \Rightarrow \( a^{Wick}u,u\)_{L^2} \geq 0 .\end{equation}
Let $S\(m\)$ denote the symbol space
$$
S\(m\)=\left\{ f\in C^{\infty}\(\R^{2n}\) : \left|\partial^{\alpha}f\(X\)\right|\leq C_{\alpha}m\(X\),\, \forall \alpha\in \mathbb{N}^{2n} \right\},
$$
where $m$ is an order function on $\R^{2n}$ (cf. section 4.4 of \cite{zw}).
Another fact we will need from \cite{wicketc} is that for $a\in S\(m\)$, 
\begin{equation}\label{eq:ww}
a^{Wick}= a^{w} + r\(a\)^{w},
\end{equation}
where 
\begin{equation}\label{eq:remainder}
r\(a\)\(X\)=\int_{0}^{1}\int_{\mathbb{R}^{2n}}\(1-t\)a''\(X+tY\)Y^{2}e^{-2\pi|Y|^2}2^{n}dY dt.
\end{equation}
For smooth symbols $a$ and $b$ with
$a\in L^{\infty}\(\R^{2n}\)$ and $\partial^{\alpha}b \in L^{\infty}\(\R^{2n}\)$ for $|\alpha| = 2$ we have the following composition formula
proven in \cite{absource},
\begin{equation}\label{eq:comp}
a^{Wick}b^{Wick} = \(ab - \frac{1}{4\pi}a' \cdot b' + \frac{1}{4\pi i}\left\{ a, b \right\}\)^{Wick} + R, 
\end{equation}
where $\|R\|_{L^2 \rightarrow L^2} \lesssim \|a\|_{L^{\infty}} \sup\limits_{|\alpha|=2}\|\partial^{\alpha}b\|_{L^{\infty}} $.
We can see that the right-hand side is well defined as an operator $\mathcal{S}\(\mathbb{R}^{n}\)\rightarrow\mathcal{S}'\(\R^{n}\)$
because for $|\alpha_{1}|=|\alpha_{2}|=1$,
$$\(\partial^{\alpha_{1}}a\)\(\partial^{\alpha_{2}}b\) = \partial^{\alpha_{1}}\(a\partial^{\alpha_{2}}b\)
-a\(\partial^{\alpha_{1}+\alpha_{2}}b\).$$
As $|a\(X\)\partial^{\alpha_{2}}b\(X\)|\lesssim 1+|X|$ we can see that the symbol on the right-hand side of \eqref{eq:comp}
is indeed 
a tempered distribution.
\section{Proving the a priori estimate}
Now we will use the Wick quantization and the weight function from Lemma \ref{lemma1} to prove Theorem \ref{thm1}.

\begin{proof}[Proof of Theorem 1]
We will now follow a multiplier method based on section 4 of \cite{hps}. 
Let $g$ be a bounded real weight function for $p$ as constructed in Corollary \ref{cor1}.
We first note that for $u\in\mathcal{S}$, by \eqref{eq:wadj},
\begin{multline}\label{eq:c}
{\rm Re}\, \(\left[p\(\sqrt{h}X\)-z\right]^{Wick}u, \left[2-g\(\sqrt{h}X\)\right]^{Wick}u\) = \\
{\rm Re} \( \left[2-g\(\sqrt{h}X\)\right]^{Wick}\left[\, \(p\(\sqrt{h}X\)-z\)\right]^{Wick}u ,\, u\)= \\
 \({\rm Re}\( \left[2-g\(\sqrt{h}X\)\right]^{Wick}\left[\, \(p\(\sqrt{h}X\)-z\)\right]^{Wick}\)u ,\, u\).
\end{multline}
From \eqref{eq:wadj} it follows that 
$$
{\rm Re}\, a^{Wick}=\frac{1}{2}\(a^{Wick}+\(a^{Wick}\)^{*}\)
=\frac{1}{2}\(a^{Wick}+\(\overline{a}^{Wick}\)\)=\({\rm Re}\, a\)^{Wick}.
$$
Using this fact and the composition formula for the Wick quantization \eqref{eq:comp},
\begin{equation}\label{eq:d}
{\rm Re}\, \(\left[2-g\(\sqrt{h}X\)\right]^{Wick}\left[p\(\sqrt{h}X\)-z\right]^{Wick} \)= 
\end{equation}
\begin{multline*}
{\rm Re}\, \bigg[\(2-g\(\sqrt{h}X\)\)\(p\(\sqrt{h}X\)-z\) +
\frac{1}{4\pi}\nabla\(g\(\sqrt{h}X\)\)\cdot\nabla\(p\(\sqrt{h}X\)\)\\
- \frac{1}{4\pi i}\left\{g\(\sqrt{h}X\),p\(\sqrt{h}X\)\right\}\bigg]^{Wick} + S_{h}\\
=\bigg[\(2-g\(\sqrt{h}X\)\)\(\rr p\(\sqrt{h}X\) - {\rm Re}\, z\) \\
+ \frac{h}{4\pi}g'\(\sqrt{h}X\)\cdot \rr p'\(\sqrt{h}X\) + \frac{h}{4\pi}H_{V_{2}} g\(\sqrt{h}X\)\bigg]^{Wick} + S_{h},
\end{multline*}
where $\|S_{h}\|_{L^{2}\rightarrow L^{2}}=O\(h\)$.
Using \eqref{eq:v1'v1} and \eqref{eq:ngh} we have 
$$ \left|h g'\(\sqrt{h}X\)\cdot \rr p'\(\sqrt{h}X\) \right|\lesssim h^{1/2}\(\rr p\(\sqrt{h}X\)\)^{1/2}$$
$$
\lesssim rh + \frac{1}{r}\rr p\(\sqrt{h}X\),
$$
for arbitrary $r>0$. By taking $r$ large enough the $\frac{1}{r}\rr p\(\sqrt{h}X\)$ term can be absorbed by $\(2-g\(\sqrt{h}X\)\)\rr p\(\sqrt{h}X\)$.

Let 
\[y=|z|-T\geq \(K-1\)T+Mh . \]

By using \eqref{eq:sub} we get that for some $C_{1}$, $C_{2} > 0$ and arbitrary $A>0$,
$$
\(2-g\(\sqrt{h}X\)\)\(\rr p\(\sqrt{h}X\) -{\rm Re}\, z\) 
$$
$$+ \frac{h}{4\pi}g'\(\sqrt{h}X\)\cdot \rr p'\(\sqrt{h}X\) 
+ \frac{h}{4\pi}H_{V_{2}}g\(\sqrt{h}X\)
$$
\begin{equation}\label{eq:a}
\gtrsim \rr p\(\sqrt{h}X\) - 3 max\(0, {\rm Re}\, z\) + \frac{h}{4\pi}H_{V_{2}} g\(\sqrt{h}X\) + O\(h\)
\end{equation}
$$
\gtrsim h^{2/3}\lambda\(\sqrt{h}X\)^{2/3} - C_{1} max\(0, {\rm Re}\, z\) - C_{2}h
$$
$$
\geq h^{2/3}\(\lambda\(\sqrt{h}X\)^{2/3}- 2AC_{1}y^{1/3}\) + AC_{1} h^{2/3}y^{1/3}
$$
$$
+\ C_{1}\(A h^{2/3}y^{1/3} - max\(0, {\rm Re}\, z\)\) - C_{2}h.
$$
As we required that ${\rm Re}\, z \leq A h^{2/3}y^{1/3}$ we have that
\begin{equation}\label{eq:summary}
h^{2/3}\(\lambda\(\sqrt{h}X\)^{2/3} - 2AC_{1}y^{1/3}\) + C_{1}\(Ah^{2/3}y^{1/3} - max\(0, {\rm Re}\, z\)\)
\end{equation}
$$
\geq -2AC_{1}h^{2/3}y^{1/3}\psi\(\frac{B \lambda\(\sqrt{h}X\)^2}{y}\),
$$
where
\begin{equation}\label{eq:bdef}
B=\frac{1}{\(2AC_{1}\)^3},
\end{equation}
and $\psi$ is the same cutoff as before.
Fix the value of $A$ by choosing it small enough such that we can use that $|V_{2}\(x\)|-T\lesssim |V_{2}'\(x\)|^{2}$ to get
\begin{equation}\label{eq:bval}
 |p\(X\)|-T \leq \frac{B \lambda\(X\)^2}{4},\quad X\in\mathbb{R}^{2n}.
\end{equation}
Substituting \eqref{eq:summary} into \eqref{eq:a} gives
\begin{equation}\label{eq:b}
\(2-g\(\sqrt{h}X\)\)\(\rr p\(\sqrt{h}X\)-{\rm Re}\, z\) + \frac{h}{4\pi}g'\(\sqrt{h}X\)\cdot  \rr p'\(\sqrt{h}X\) 
\end{equation}
$$+ \frac{h}{4\pi}H_{V_{2}} g\(\sqrt{h}X\)$$
$$\gtrsim -2AC_{1}h^{2/3}y^{1/3}\psi\(\frac{B \lambda\(\sqrt{h}X\)^2}{y}\) - C_{2}h + AC_{1}h^{2/3}y^{1/3}.
$$
Now  \eqref{eq:pos}, \eqref{eq:c}, \eqref{eq:d}, and \eqref{eq:b} imply that, for $h$ sufficiently small and $\rr z\leq Ah^{2/3}y^{1/3}$,
$$
{\rm Re}\, \([p\(\sqrt{h}X\)-z]^{Wick}u, [2-g\(\sqrt{h}X\)]^{Wick}u\)+ h\|u\|^{2}_{L^2} +
$$
$$h^{2/3}y^{1/3}\(\psi\(\frac{B\lambda\(\sqrt{h}X\)^2}{y}\)^{Wick}u, u\)
\gtrsim h^{2/3}y^{1/3}\|u\|^{2}_{L^2}.
$$
By the Cauchy-Schwarz inequality and \eqref{eq:l2bd} we get that
$$
\left\|[p\(\sqrt{h}X\)-z]^{Wick}u\right\|_{L^2} + h\|u\|_{L^2}  +
h^{2/3}y^{1/3}\left\|\psi\(\frac{B\lambda\(\sqrt{h}X\)^2}{y}\)^{Wick}u\right\|_{L^2}
$$
$$\gtrsim h^{2/3}y^{1/3}\|u\|_{L^2}.
$$
Now we pick $M$ sufficiently large so that the $h\|u\|_{L^2}$ term can be absorbed by the right-hand side to get
\begin{equation}\label{eq:wickineq}
\left\|\left[p\(\sqrt{h}X\)-z\right]^{Wick}u\right\|_{L^2} + h^{2/3}y^{1/3}\left\|\psi\(\frac{B\lambda\(\sqrt{h}X\)^2}{y}\)^{Wick}u\right\|_{L^2}
\end{equation}
$$
\gtrsim h^{2/3}y^{1/3}\|u\|_{L^2}.
$$
This resembles the desired inequality, but we still need to switch from the Wick quantization to the Weyl quantization, and we need to 
deal with the term involving $\psi$. First we will switch to the Weyl quantization.
The Calder\'{o}n-Vaillancourt Theorem (Theorem 4.23 in \cite{zw}) states that for $a\in S\(1\)$ there exists
a universal constant $\lambda$ such that
\begin{equation}\label{eq:cv}
\left\|a^{w}\(x,D_{x}\)\right\|_{L^{2}\rightarrow L^{2}}\lesssim \sup\limits_{|\alpha|\leq \lambda n}\|\partial^{\alpha}a\|_{L^{\infty}}.
\end{equation}
From \eqref{eq:vbd} we have that 
$$
\partial^{\alpha}\(p\(\sqrt{h}X\)\)= O\(h^{|\alpha|/2}\), |\alpha|\geq 2,
$$
so we can apply the Calder\'{o}n-Vaillancourt theorem to the remainder term in \eqref{eq:ww} with $a\(X\)=p\(\sqrt{h}X\)-z$ to get
\begin{equation}\label{eq:pw2w}
\left\|p\(\sqrt{h}X\)^{Wick}u-zu\right\|_{L^2}=\left\|p\(\sqrt{h}X\)^{w}u-zu\right\|_{L^2} + O\(h\)\|u\|_{L^2}.
\end{equation}
To do the same thing to the other term on the left side of \eqref{eq:wickineq} we need to estimate the derivatives of $\psi\(\frac{B\lambda\(\sqrt{h}X\)^2}{y}\)$.
\begin{lemma}\label{lemma2}
\begin{equation}\label{eq:psisize}
\left|\partial^{\alpha}\(\psi\(\frac{B\lambda\(\sqrt{h}X\)^2}{y}\)\)\right|\lesssim
\frac{h^{1/2}}{y^{1/2}},\quad |\alpha|\geq 1.
\end{equation}
\end{lemma}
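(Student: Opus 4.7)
My plan is to expand $\partial^\alpha \psi(F)$ by Faà di Bruno with $F(X) := B\lambda(\sqrt{h}X)^2/y$. Each term in the expansion is a constant times $\psi^{(k)}(F)$ (for some $1 \leq k \leq |\alpha|$) multiplied by $k$ factors of the form $\partial^{\beta_i} F$, where $|\beta_i| \geq 1$ and $\sum_i |\beta_i| = |\alpha|$. Since $\psi$ is constant on $\{|t|<1\}$ and on $\{|t|>2\}$, every $\psi^{(k)}$ with $k \geq 1$ is supported in $\{1 \leq |t| \leq 2\}$; together with $F \geq 0$ this localizes each nonvanishing summand to the set where $\lambda(\sqrt{h}X) \sim y^{1/2}$. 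This localization is what makes the estimate possible.

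Next I would prove the auxiliary bound $|\partial^\beta \lambda^2(X)| \lesssim 1 + \lambda(X)$ for every $|\beta|\geq 1$; the case $|\beta|=1$ is \eqref{eq:lambda'}. For $|\beta|\geq 2$, the $|\xi|^2$ and $V_1$ contributions to $\lambda^2$ are $O(1)$ directly from \eqref{eq:vbd}, and $|V_2'|^2 = \sum_j (\partial_j V_2)^2$ expands under Leibniz into a sum of products $(\partial^{\beta' + e_j} V_2)(\partial^{\beta'' + e_j} V_2)$ with $\beta' + \beta'' = \beta$; at most one factor in each summand can be a first derivative of $V_2$, contributing $O(|V_2'|) = O(\lambda)$, while the remaining factor has order $\geq 2$ and is $O(1)$ by \eqref{eq:vbd}. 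This yields the claimed $O(1+\lambda)$.

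Substituting into $\partial^{\beta_i} F = (B/y)\, h^{|\beta_i|/2} (\partial^{\beta_i}\lambda^2)(\sqrt{h}X)$ and using $\lambda(\sqrt{h}X) \lesssim y^{1/2}$ on the relevant support, I obtain
$$|\partial^{\beta_i} F| \lesssim \frac{h^{|\beta_i|/2}(1+y^{1/2})}{y} = \frac{h^{|\beta_i|/2}}{y} + \frac{h^{|\beta_i|/2}}{y^{1/2}}.$$
The second summand is $\leq h^{1/2}/y^{1/2}$ since $h \leq 1$. For the first, when $|\beta_i|\geq 2$ one has $h^{|\beta_i|-1} \leq h \leq y/M$, so $h^{|\beta_i|/2}/y \leq h^{1/2}/y^{1/2}$; the case $|\beta_i|=1$ is handled directly by the sharper \eqref{eq:lambda'} (no ``$+1$'' term appears), giving $|\partial F| \lesssim h^{1/2}\lambda/y \lesssim h^{1/2}/y^{1/2}$. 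Hence $|\partial^{\beta_i} F| \lesssim h^{1/2}/y^{1/2}$ for every $|\beta_i|\geq 1$.

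For each Faà di Bruno summand, $\prod_i |\partial^{\beta_i} F| \lesssim (h^{1/2}/y^{1/2})^k$, and since the hypothesis $y \geq Mh$ with $M \geq 1$ forces $h^{1/2}/y^{1/2} \leq M^{-1/2} \leq 1$, this is $\lesssim h^{1/2}/y^{1/2}$. Adding the finitely many terms (the implicit constant depending on $|\alpha|$) gives the lemma. The main subtlety is the auxiliary bound $|\partial^\beta \lambda^2|\lesssim 1+\lambda$ for $|\beta|\geq 2$: a priori, repeated differentiation of $|V_2'|^2$ could pile up several copies of the unbounded $V_2'$, but Leibniz places at most one such factor in each summand, with all others controlled by \eqref{eq:vbd}.
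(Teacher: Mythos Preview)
Your proof is correct and follows essentially the same approach as the paper: Fa\`a di Bruno expansion, the auxiliary bound $|\partial^{\beta}\lambda^{2}|\lesssim 1+\lambda$ for $|\beta|\geq 2$ (which the paper states as \eqref{eq:lambda''}), localization on the support of $\psi^{(k)}$ to get $\lambda(\sqrt{h}X)\lesssim y^{1/2}$, and then bounding each factor $\partial^{\beta_i}F$ by $h^{1/2}/y^{1/2}$ using $h\leq 1$ and $y\geq Mh$. Your justification of the auxiliary bound via Leibniz, noting that at most one factor in each summand of $\partial^{\beta}|V_{2}'|^{2}$ can be a first derivative of $V_{2}$, is slightly more detailed than the paper's but identical in substance.
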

\begin{proof}
First, note that because $V''\in S\(1\)$ we have 
\begin{equation}\label{eq:lambda''}
\partial^{\alpha}\lambda\(X\)^{2}=\partial^{\alpha}\(|\xi|^{2}+V_{1}\(x\)+|V_{2}'\(x\)|^{2}\)\lesssim 1+|V_{2}'|\lesssim 1+\lambda,
\quad |\alpha|\geq 2.
\end{equation}
Also, for $X$ in the support of $\psi\(\frac{B\lambda\(\sqrt{h}X\)^2}{y}\)$ we have
\[\lambda\(\sqrt{h}X\)\lesssim y^{1/2},\]
and so, by \eqref{eq:lambda'}
\[\left|\partial^{\alpha}\(\frac{\lambda\(\sqrt{h}X\)^{2}}{y}\)\right|\lesssim \frac{h^{1/2}\lambda\(\sqrt{h}X\)}{y}
\lesssim \frac{h^{1/2}}{y^{1/2}},\quad |\alpha|=1,\]
and by \eqref{eq:lambda''}
\[\left|\partial^{\alpha}\(\frac{\lambda\(\sqrt{h}X\)^{2}}{y}\)\right|\lesssim \frac{h^{|\alpha|/2}\(1+\lambda\(\sqrt{h}X\)\)}{y}
\lesssim \frac{h}{y}+\frac{h}{y^{1/2}}\lesssim \frac{h^{1/2}}{y^{1/2}},\quad |\alpha|\geq 2.\]
We can express $\partial^{\alpha}\(\psi\(\frac{B\lambda\(\sqrt{h}X\)^2}{y}\)\)$ as a linear combination of terms of the form
\[\psi^{\(k\)}\(\frac{B\lambda\(\sqrt{h}X\)^2}{y}\)
\partial^{\gamma_{1}}\(\frac{\lambda\(\sqrt{h}X\)^{2}}{y}\) \ldots \partial^{\gamma_{k}}\(\frac{\lambda\(\sqrt{h}X\)^{2}}{y}\),\]
where $\alpha=\gamma_{1}+\ldots +\gamma_{k}$, $|\gamma_{i}|\geq 1$ for all $i$, $1\leq k\leq |\alpha|$.
Each such term is of size $O\(\(\frac{h}{y}\)^{k/2}\)$, proving the lemma.
\end{proof}

Using Lemma \ref{lemma2} and \eqref{eq:ww} we get
\begin{multline}\label{eq:psiw2w}
\left\|\psi\(\frac{B\lambda\(\sqrt{h}X\)^2}{y}\)^{Wick}u\right\|_{L^2}\\
=\left\|\psi\(\frac{B\lambda\(\sqrt{h}X\)^2}{y}\)^{w}u\right\|_{L^2} + O\(M^{-1/2}\)\|u\|_{L^2}.
\end{multline}
By substituting \eqref{eq:pw2w} and \eqref{eq:psiw2w} into \eqref{eq:wickineq} and taking $M$ sufficiently large we get
\begin{equation}\label{eq:weylineq}
\left\|\left[p\(\sqrt{h}X\)-z\right]^{w}u\right\|_{L^2} + h^{2/3}y^{1/3}\left\|\psi\(
\frac{B \lambda\(\sqrt{h}X\)^2}{y}\)^{w}u\right\|_{L^2}\gtrsim h^{2/3}y^{1/3}\|u\|_{L^2}.
\end{equation}
Now all that remains is to deal with the $\psi$ term, which we will accomplish by showing, with some basic Weyl calculus, 
that it can be absorbed by the other two terms.

Since $\psi$ is real valued $\psi^{w}$ is self-adjoint. Therefore
$$
\left\|\psi\(\frac{B\lambda\(\sqrt{h}X\)^2}{y}\)^{w}u\right\|^{2}_{L^2} = \(\(\psi\(\frac{B\lambda\(\sqrt{h}X\)^2}{y}\)^{w}\)^{2}u, u\).
$$
For the sake of brevity we will henceforth use the notation
\[\Psi\(X\):=\psi\(\frac{B\lambda\(\sqrt{h}X\)^2}{y}\).\]
Lemma \ref{lemma2} can then be rephrased as:
\[\Psi'\(X\)\in S\(\frac{h^{1/2}}{y^{1/2}}\).\]
Let us now recall some basic Weyl calculus.
For symbols $a$ and $b$ in $S\(1\)$, we have the following composition formula for their Weyl quantizations \cite{wicketc},
\begin{equation}\label{eq:wcomp}
a^{w}b^{w}=\(a\# b\)^{w}=\(ab + \frac{1}{4\pi i}\{a,b\} +R\)^{w}, 
\end{equation}
where 
\begin{multline*}
R=-\frac{1}{16\pi^{2}}\int_{0}^{1}\(1-t\)e^{\frac{it}{4\pi}\(D_{\xi}\cdot D_{y}-D_{x}\cdot D_{\eta}\)}\\
\(D_{\xi}\cdot D_{y}-D_{x}\cdot D_{\eta}\)^{2}a\(x,\xi\)b\(y,\eta\)dt \bigg|_{\(y,\eta\)=\(x,\xi\)}.
\end{multline*}
Thus, using that $\left\{\Psi,\Psi\right\}=0$,
\begin{multline*}
\Psi\(X\)\#\Psi\(X\)=\Psi^{2}\(X\)-
\frac{1}{16\pi^{2}}\int_{0}^{1}\(1-t\)e^{\frac{it}{4\pi}
\(D_{\xi}\cdot D_{y}-D_{x}\cdot D_{\eta}\)}\\
\(D_{\xi}\cdot D_{y}-D_{x}\cdot D_{\eta}\)^{2}
\Psi\(x,\xi\)\Psi\(y,\eta\)
dt \bigg|_{\(y,\eta\)=\(x,\xi\)}.
\end{multline*}
By Lemma \ref{lemma2}
$$
\(D_{\xi}\cdot D_{y}-D_{x}\cdot D_{\eta}\)^{2}\Psi\(x,\xi\)
\Psi\(y,\eta\)
=O_{S\(1\)}\(\frac{h}{y}\),
$$
where ``$F_{1} = O_{S\(1\)}\(F_{2}\)$" means $\partial^{\alpha}F_{1}= O\(F_{2}\)$, for all $\alpha$.
By Theorem 4.17 in \cite{zw} the operator $e^{\frac{it}{2}\(D_{\xi}\cdot D_{y}-D_{x}\cdot D_{\eta}\)}$ maps $S\(m\)$
to $S\(m\)$ continuously for any order function $m$, so by the above we get that
$$
\(\Psi\(X\)^{w}\)^{2}=\Psi^{2}\(X\)^{w} + \frac{h}{y}R_{1}^{w},
$$
for some $R_{1}= O_{S\(1\)}\(1\)$.
Thus by applying \eqref{eq:cv} we get
\begin{equation}\label{eq:psicomp}
\left\|\Psi\(X\)^{w}u\right\|^{2}_{L^2} = \(\Psi^2\(X\)^{w}u, u\) + O\(\frac{h}{y}\)\|u\|^{2}_{L^2}.
\end{equation}
To control the first term on the right-hand side we follow a method similar to Lemma 8.2 from \cite{hss}. 
\begin{lemma}\label{lemma3}
$\(\Psi^{2}\(X\)^{w}u, u\) \leq 
\(\(4\frac{\left|p\(\sqrt{h}X\)-z\right|^2}{y^2}\Psi^{2}\(X\)\)^{w}u, u\)
+O\(\frac{h^{1/2}}{y^{1/2}}\)\|u\|^{2}_{L^2}.$
\end{lemma}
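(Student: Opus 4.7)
The plan is to introduce the nonnegative symbol $f(X):=4|p(\sqrt{h}X)-z|^{2}\Psi^{2}(X)/y^{2}-\Psi^{2}(X)$ and establish $(f^{w}u,u)\geq -C\sqrt{h/y}\|u\|^{2}$, which is equivalent to the stated inequality. Pointwise nonnegativity of $f$ rests on the same support reasoning that underlies \eqref{eq:bval}: on $\mathrm{supp}\,\Psi$ we have $\lambda(\sqrt{h}X)^{2}\leq 2y/B$, and \eqref{eq:bval} gives $|p(\sqrt{h}X)|\leq T+B\lambda(\sqrt{h}X)^{2}/4\leq T+y/2$. Combining with $|z|\geq T+y$ yields $|p(\sqrt{h}X)-z|\geq y/2$ on $\mathrm{supp}\,\Psi$, hence $4|p(\sqrt{h}X)-z|^{2}/y^{2}\geq 1$ there, and multiplying by $\Psi^{2}\geq 0$ gives $f\geq 0$ on $\R^{2n}$.

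Next I would estimate the derivatives of $f$. Let $\tilde\Psi$ be a cutoff equal to $1$ on $\mathrm{supp}\,\Psi$ and supported in $\{\lambda(\sqrt{h}X)^{2}\leq 3y/B\}$; the argument above gives $|p(\sqrt{h}X)-z|\lesssim y$ on $\mathrm{supp}\,\tilde\Psi$. Using \eqref{eq:vbd} and \eqref{eq:v1'v1}, together with the bound $|\xi|^{2}\lesssim y/h$ on $\mathrm{supp}\,\tilde\Psi$, an expansion like the one in the proof of Lemma \ref{lemma2} gives $|\partial^{\alpha}[(p(\sqrt{h}X)-z)/y]|\lesssim \sqrt{h/y}$ on $\mathrm{supp}\,\tilde\Psi$ for all $|\alpha|\geq 1$. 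Combined with Lemma \ref{lemma2} applied to $\Psi$, the Leibniz rule then gives $|\partial^{\alpha}f|\lesssim \sqrt{h/y}$ for every $|\alpha|\geq 1$.

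Finally, since $f\geq 0$, the positivity \eqref{eq:pos} of the Wick quantization yields $(f^{Wick}u,u)\geq 0$. By \eqref{eq:ww}, $f^{w}=f^{Wick}-r(f)^{w}$, so $(f^{w}u,u)\geq -\|r(f)^{w}\|_{L^{2}\to L^{2}}\|u\|^{2}$. Because $r(f)$ is a Gaussian-weighted average of $f''$ by \eqref{eq:remainder}, the bounds on $\partial^{\beta}f$ transfer to $|\partial^{\beta}r(f)|\lesssim \sqrt{h/y}$ for all $\beta$, and the Calder\'{o}n-Vaillancourt estimate \eqref{eq:cv} then yields $\|r(f)^{w}\|_{L^{2}\to L^{2}}\lesssim \sqrt{h/y}$, completing the proof. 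The main difficulty lies in uniformly controlling the derivatives of $f$ by $\sqrt{h/y}$, since the factor $|p(\sqrt{h}X)-z|^{2}/y^{2}$ is globally unbounded; the cutoff $\Psi^{2}$ and its enlargement $\tilde\Psi$ localize all estimates to the region where this factor is $O(1)$.
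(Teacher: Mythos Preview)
Your proof is correct and follows essentially the same route as the paper: define the nonnegative symbol $Q=f$, establish $f\geq 0$ via \eqref{eq:bval} and $|z|=T+y$, then pass from Wick positivity \eqref{eq:pos} through the Wick--Weyl relation \eqref{eq:ww}--\eqref{eq:remainder} and bound the remainder by Calder\'{o}n--Vaillancourt using the derivative estimates $|\partial^{\alpha}f|\lesssim\sqrt{h/y}$ for $|\alpha|\geq 1$. The auxiliary cutoff $\tilde\Psi$ is a harmless addition but not actually needed, since every term in the Leibniz expansion of $\partial^{\alpha}f$ already carries a factor of $\partial^{\beta}\Psi^{2}$ supported in $\mathrm{supp}\,\Psi$.
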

\begin{proof}
Recalling \eqref{eq:bval}, we see that on the support of $\Psi\(X\)$ we have that 
\begin{equation}\label{eq:ponpsi}
\left|p\(\sqrt{h}X\)\right|-T\leq \frac{B\lambda\(\sqrt{h}X\)^2}{4} \leq \frac{y}{2}.
\end{equation}
Thus
$$\frac{1}{y}\left|p\(\sqrt{h}X\)-z\right| \geq\frac{1}{y}\(|z|-\left|p\(\sqrt{h}X\)\right|\)$$

$$=\frac{1}{y}\(y+T-\left|p\(\sqrt{h}X\)\right|\)\geq \frac{1}{2},$$
and so
\begin{equation}\label{eq:q}
\Psi^{2}\(X\) \leq
4\frac{\left|p\(\sqrt{h}X\)-z\right|^2}{y^2}\Psi^{2}\(X\).
\end{equation}
Let 
\begin{equation}\label{eq:qdef}
Q\(X\)= 4\frac{\left|p\(\sqrt{h}X\)-z\right|^2}{y^2}\Psi^{2}\(X\)
-\Psi^{2}\(X\)\geq 0.
\end{equation}
By \eqref{eq:pos}, \eqref{eq:ww}, and \eqref{eq:remainder} we get that
\begin{equation}\label{eq:qgard}
\(Q^{w}\(x,D_{x}\)u,u\)_{L^2}\ +\quad 
\end{equation}
\[\left\|\(\int_{0}^{1}\int_{\mathbb{R}^{2n}}\(1-t\)Q''\(X+tY\)Y^{2}e^{-2\pi|Y|^2}2^{n}dY dt\)^{w}u\right\|_{L^2}
\left\|u\right\|_{L^2}\geq 0.\]
To estimate the second term, \eqref{eq:cv} implies that we need to estimate the derivatives of order two and higher of $Q$.

As $|z|>KT+Mh$ and $K>1$,
\[y=|z|-T>\(K-1\)T\gtrsim T.\]
So, for $X$ in the support of $\Psi$, using \eqref{eq:ponpsi}, $y\gtrsim T$, and $y\gtrsim |z|$, we get the following
$$\left| \frac{p\(\sqrt{h}X\)-z}{y}\right|\lesssim \frac{1}{y}\(y+T+|z|\)\lesssim 1. $$
For such $X$, using \eqref{eq:v1'v1} we also have
\begin{equation}\label{eq:p'}
\left|\partial^{\alpha}\frac{p\(\sqrt{h}X\)-z}{y} \right| \lesssim \frac{h^{1/2}}{y}\lambda\(\sqrt{h}X\)
\lesssim \frac{h^{1/2}}{y^{1/2}},\, |\alpha|=1
\end{equation}
and
\begin{equation}\label{eq:p''}
\left| \partial ^{\alpha}\frac{p\(\sqrt{h}X\)-z}{y} \right| \lesssim \frac{h^{|\alpha|/2}}{y},\, |\alpha|\geq 2.
\end{equation}
By the above and \eqref{eq:psisize}, for $|\alpha|\geq 1$,
\[\left| \partial^{\alpha}Q\(X\) \right| \lesssim \frac{h^{1/2}}{y^{1/2}}.\]
Thus by applying the Calder\'{o}n-Vaillancourt theorem \eqref{eq:cv} we can bound the latter term of \eqref{eq:qgard} as follows. 
$$
\left\|\(\int_{0}^{1}\int_{\mathbb{R}^{2n}}\(1-t\)Q''\(X+tY\)Y^{2}e^{-2\pi|Y|^2}2^{n}dY dt\)^{w}u\right\|_{L^2}
\lesssim \frac{h^{1/2}}{y^{1/2}}\|u\|_{L^2}.
$$
Therefore \eqref{eq:qgard} implies a variant of the sharp G\r{a}rding inequality (cf. Theorem 4.32 of \cite{zw}) for $Q$,
$$
\(Q^{w}\(x,D_{x}\)u,u\)_{L^2} + O\(\frac{h^{1/2}}{y^{1/2}}\)\|u\|^{2}_{L^2} \geq 0.
$$
And so by \eqref{eq:qdef} we attain the desired inequality,
$$
\(\Psi^2\(X\)^{w}u, u\) \leq $$
\begin{equation}\label{eq:pelliptic}
\(\(4\frac{|p\(\sqrt{h}X\)-z|^2}{y^2}\Psi^{2}\(X\)\)^{w}u, u\)
+O\(\frac{h^{1/2}}{y^{1/2}}\)\|u\|^{2}_{L^2}.
\end{equation}
\end{proof}
Finally, we have to understand the first term on the right side of \eqref{eq:pelliptic}. 
The estimates \eqref{eq:psisize}, \eqref{eq:p'}, and \eqref{eq:p''} imply that
$$
\partial^{\alpha}\(\frac{\(p\(\sqrt{h}X\)-z\)}{y}\Psi\(X\)\)
=O\(\(\frac{h}{y}\)^{1/2}\),\, |\alpha|\geq 1.
$$
Thus, using this and \eqref{eq:wcomp} and repeating the same Weyl calculus argument used to attain \eqref{eq:psicomp}
we get
\begin{multline*}
4\frac{\left|p\(\sqrt{h}X\)-z\right|^2}{y^2}\Psi^{2}\(X\)\\
= 4 \(\frac{\overline{\(p\(\sqrt{h}X\)-z\)}}{y}\Psi\(X\)\#
\frac{\(p\(\sqrt{h}X\)-z\)}{y}\Psi\(X\)\)\\ -\frac{1}{\pi i}\left\{\frac{\overline{\(p\(\sqrt{h}X\)-z\)}}{y}\Psi\(X\),
\frac{\(p\(\sqrt{h}X\)-z\)}{y}\Psi\(X\) \right\}+
\frac{h}{y}R_{2}\\
=4 \(\frac{\overline{\(p\(\sqrt{h}X\)-z\)}}{y}\Psi\(X\)\#
\frac{\(p\(\sqrt{h}X\)-z\)}{y}\Psi\(X\)\)
+\frac{h}{y}R_{3},
\end{multline*}
where $R_{2}, R_{3}= O_{S\(1\)}\(1\)$.
We also similarly get from \eqref{eq:psisize}, \eqref{eq:p'}, \eqref{eq:p''} and \eqref{eq:wcomp} that
$$
\Psi\(X\)\# \frac{\(p\(\sqrt{h}X\)-z\)}{y} 
= \frac{\(p\(\sqrt{h}X\)-z\)}{y}\Psi\(X\) + \frac{h}{y}R_{4},
$$
for $R_{4}= O_{S\(1\)}\(1\)$. 

Now, using \eqref{eq:psicomp}, Lemma \ref{lemma3}, the fact that $\frac{h}{y}\leq \frac{1}{M}$, and that $\|\Psi^{w}\|_{L^{2}\rightarrow L^{2}}=O\(1\)$,
we can conclude that
$$
\left\Vert\Psi\(X\)^{w}u\right\Vert^{2}_{L^2}\leq \left\Vert \(\Psi\(X\)\)^{w} 
\(\frac{\(p\(\sqrt{h}X\)-z\)}{y}\)^{w}u\right\Vert^{2}_{L^2} + O\(\frac{h^{1/2}}{y^{1/2}}\)\|u\|^2_{L^2}
$$
$$
\lesssim \frac{1}{y^{2}}\left\|\left[p\(\sqrt{h}X\)-z\right]^{w}u\right\|^{2}_{L^2} + O\(\frac{1}{M^{1/2}}\)\|u\|^2_{L^2}.
$$

Plugging this in to \eqref{eq:weylineq} we get
\begin{multline*}
\left\|\left[p\(\sqrt{h}X\)-z\right]^{w}u\right\|_{L^2} + \frac{h^{2/3}}{y^{2/3}}\left\|\left[p\(\sqrt{h}X\)-z\right]^{w}u\right\|_{L^2}\\
+ O\(\frac{1}{M^{1/4}}\)h^{2/3}y^{1/3}\|u\|_{L^2}
\gtrsim h^{2/3}y^{1/3}\|u\|_{L^2}.
\end{multline*}
Then taking $M$ sufficiently large yields
$$
\left\|\left[p\(\sqrt{h}X\)-z\right]^{w}u\right\|_{L^2}\gtrsim h^{2/3}y^{1/3}\|u\|_{L^2}.
$$
Finally, by making the symplectic change of coordinates $x\rightarrow \frac{x}{\sqrt{h}}$, $\xi\rightarrow \sqrt{h}\xi$
we obtain the desired estimate,
$$
\left\|\(p^{w}\(x, hD_{x}\) - z\)u\right\|_{L^2}\gtrsim h^{2/3}y^{1/3}\|u\|_{L^2}.
$$
\end{proof}
\section{From a priori to a resolvent estimate}
Now we will use Theorem \ref{thm1} to prove Theorem \ref{thm2}. To do so it will be convenient to work in the standard, or Kohn-Nirenberg, quantization
rather than the Weyl quantization. In the semiclassical case, this quantization is defined by
$$
a^{KN}\(x, hD_{x}\)u\(x\) = \int_{\mathbb{R}^{2n}} e^{2i\pi\(x-y\)\cdot\xi}a\(x, h\xi\) u\(y\) dy d\xi
$$
$$
=\mathcal{F}^{-1}_{\xi\rightarrow x}a\(x,h\xi\)\mathcal{F}_{y\rightarrow \xi} u\(y\),
$$
where $\mathcal{F}$ denotes the Fourier transform.
Note that just like in the Weyl quantization we have that
$$p^{KN}\(x,hD_{x}\)= -\frac{h^{2}}{4\pi^{2}}\Delta + V\(x\).$$
In this quantization we have the composition formula
\begin{equation}\label{eq:stdcomp}
a^{KN}\(x,hD_{x}\)b^{KN}\(x,hD_{x}\)=\(ab + \frac{h}{2\pi i}D_{\xi}a\cdot D_{x}b +R\)^{KN}\(x,hD_{x}\),
\end{equation}
where 
$$
R=-\frac{h^{2}}{4\pi^{2}}\int_{0}^{1}\(1-t\)e^{\frac{ith}{2\pi}D_{\xi}\cdot D_{y}}
\(D_{\xi}\cdot D_{y}\)^{2}a\(x,\xi\)b\(y,\eta\)dt \bigg|_{\(y,\eta\)=\(x,\xi\)}.
$$
The standard quantization of a symbol is equivalent to the Weyl quantization of a related symbol \cite{zw}, specifically
if $a\in S\(m\)$ for some order function $m$, we have
$$
a^{KN}\(x,hD_{x}\)=\(e^{\frac{h}{4\pi i}\(D_{\xi}\cdot D_{x}\)}a\)^{w}\(x,hD_{x}\)
$$
and 
$$
e^{\frac{h}{4\pi i}\(D_{\xi}\cdot D_{x}\)}a\in S\(m\).
$$
This tells us that some properties of the Weyl quantization can be applied to the standard quantization as well, the 
Calder\'{o}n-Vaillancourt theorem \eqref{eq:cv} among them.
\begin{proof}[Proof of Theorem 2]
To show that $P$, the graph closure of $p^{w}\(x,hD_{x}\)$ on $\Sch\(\R^{n}\)$ has domain $D_{max}=\left\{u\in L^2 : p^{w}u\in L^{2}\right\}$ we follow a method
from H\"{o}rmander found in \cite{horm}. Let $\chi_{\delta}: L^{2}\rightarrow \Sch$ be a family of operators parametrized by $\delta>0$
such that
$\chi_{\delta}u\rightarrow u$ in $L^2$ as $\delta\rightarrow 0$ for all $u\in L^{2}$.
If 
\begin{equation}\label{eq:gc}
\(P\chi_{\delta}-\chi_{\delta}P\)u \rightarrow 0
\end{equation}
in $L^2$ as $\delta\rightarrow 0$ for all $u\in D_{max}$
then we have that $u_{\delta}:=\chi_{\delta}u$ is a sequence of functions in $\Sch$ converging to $u$ and that
$Pu_{\delta}\rightarrow Pu$, thus the domain of $P$ is $D_{max}$.

To accomplish this, let $\phi\in C_{c}^{\infty}\(\R^{n},[0,1]\)$ be a cutoff function with $\phi\(x\)=1$ for $x$ in a neighborhood of $0$.
It suffices to consider the $h=1$ case as $h$ is fixed independent of $\delta$ and thus does not affect issues of convergence.
Then define
\begin{equation*}
    \chi_{\delta}u= \(\phi\(\delta x\)\phi\(\delta \xi\)\)^{KN}u,\quad u\in L^{2}.
\end{equation*}
We then have that $\chi_{\delta}:L^{2}\rightarrow \Sch$ and $\chi_{\delta}u\rightarrow u$ in $L^2$ as $\delta\rightarrow 0$ for all $u\in L^2$
as desired.
We then need to check \eqref{eq:gc}. This can be accomplished using some standard quantization symbol calculus for the commutator 
$[P,\chi_{\delta}]$.
By \eqref{eq:stdcomp} we have
\begin{equation*}
[P,\chi_{\delta}]=\(\frac{1}{2\pi i}\left\{p\(x,\xi\),\phi\(\delta x\)\phi\(\delta \xi\)\right\}+ O_{S\(1\)}\(\delta^{2}\)\)^{KN}
\end{equation*}
\begin{equation}\label{eq:123}
=\frac{\delta}{\pi i}\(\xi\cdot \phi'\(\delta x\)\phi\(\delta \xi\)\)^{KN}
-\frac{\delta}{2\pi i}\(V'\(x\)\cdot \phi'\(\delta \xi\)\phi\(\delta x\)\)^{KN} u +\(O_{S\(1\)}\(\delta^{2}\)\)^{KN}
\end{equation}
$$
=I + II + III.
$$
On the support of $\phi\(\delta x\)\phi\(\delta \xi\)$ we have that $|x|\lesssim \delta^{-1}$ and $|\xi|\lesssim \delta^{-1}$ so,
as $\delta\rightarrow 0$,
$$
\left|\delta \partial^{\alpha} \(\xi\cdot \phi'\(\delta x\)\phi\(\delta \xi\)\)\right|=O\(1\),\quad \forall \alpha
$$
and, recalling \eqref{eq:vbd},
$$
\left|\delta \partial^{\alpha} \(V'\(x\)\cdot \phi'\(\delta \xi\)\phi\(\delta x\)\)\right|=O\(1\),\quad \forall \alpha.
$$
Thus by \eqref{eq:cv}
\[\left\|[P,\chi_{\delta}]\right\|_{L^{2}\rightarrow L^{2}}=O\(1\). \]
It thus suffices to show that $[P,\chi_{\delta}]u\rightarrow 0$ for all $u$ in a dense subset of $L^{2}$.
Term $III$ is easily dealt with because as $\delta\rightarrow 0$, 
\[\|III u\|_{L^2}=O\(\delta^{2}\)\|u\|_{L^2}\rightarrow 0.\]
To deal with terms $I$ and $II$, let $u\in L^{2}$ be such that $\mathcal{F}u\in C^{\infty}_{c}\(\R^{n}\)$.
Then
\[II u= -\frac{\delta}{2\pi i}\phi\(\delta x\) V'\(x\)\cdot \mathcal{F}^{-1}\(\phi'\(\delta \xi\)\(\mathcal{F}u\)\(\xi\)\). \]
Note that $\phi'\(\delta \xi\)$ is supported where $|\xi|\sim\delta^{-1}$ so for $\delta$ sufficiently small
\[\phi'\(\delta \xi\)\(\mathcal{F}u\)\(\xi\)=0\]
and so
\[\|II u\|_{L^2}\rightarrow 0.\]
Also, 
\[I u=\frac{\delta}{\pi i}\phi'\(\delta x\)\cdot \mathcal{F}^{-1} \(\xi \phi\(\delta \xi\)\(\mathcal{F}u\)\(\xi\)\).\]
Because $\mathcal{F}u\(\xi\)$ is compactly supported and $\phi=1$ in a neighborhood of $0$, for $\delta$ sufficiently small we have
\[\phi\(\delta \xi\)\(\mathcal{F}u\)\(\xi\)=\(\mathcal{F}u\)\(\xi\).\]
And then
\[I u=\frac{\delta}{\pi i}\phi'\(\delta x\)\cdot \mathcal{F}^{-1}\( \xi \(\mathcal{F}u\)\(\xi\)\)\]
\[=-\frac{\delta}{2\pi^{2}}\phi'\(\delta x\)\cdot u'\(x\).\]
Since $\mathcal{F}u\in C_{c}^{\infty}$ we have $u'\in L^{2}$ so 
\[\|I u\|_{L^2}\rightarrow 0.\]

Therefore \eqref{eq:gc} holds, which tells us that the graph closure of $p^{w}\(x,hD_{x}\)$ on $\Sch$, 
has the domain $D_{max}$. Thus, for $z$ and $h$ satisfying the conditions in Theorem \ref{thm1} we have
\[\|\(P-z\)u\|_{L^{2}}\gtrsim h^{2/3}\(|z|-T\)^{1/3}\|u\|_{L^{2}}\quad \forall u\in D_{max}.\] 
We thus have that $P-z$ is injective on $D_{max}$ and has closed range. 
We can apply the same argument to the formal adjoint of $p^{w}$ on $\Sch$, $\overline{p}^{w}-\overline{z}=\(|\xi|^{2}+\overline{V}\(x\)\)^{w}-\overline{z}$,
and we similarly get its graph closure is
$\overline{P}-\overline{z}=-\frac{h^{2}}{4\pi^{2}}\Delta +\overline{V}\(x\) -\overline{z}$ with domain $\{u\in L^{2}: \overline{p}^{w}u\in L^{2}\}$,
which is also injective with closed range. As $\overline{P}-\overline{z}$ has maximal domain
we have that $\overline{P}-\overline{z}=\(P-z\)^{*}$. Thus $P-z$ is invertible, and we get the desired resolvent estimate,
\[\|\(P-z\)^{-1}u\|_{L^{2}}\lesssim h^{-2/3}\(|z|-T\)^{-1/3}\|u\|_{L^2}.\]
\end{proof}.

\section{The bounded $z$ case}
In the preceding sections, the condition that was placed on $V$ in \eqref{eq:vv't}, that
\[ |V_{2}\(x\)|-T\lesssim \left|V_{2}'\(x\)\right|^{2},\quad \forall x\in \R^{n}, \]
is only used once. It is used so that we can get the inequality \eqref{eq:bval}
\[|p\(X\)|-T\leq \frac{B \lambda\(X\)^{2}}{4},\quad \forall X\in \R^{2n},\]
which implies
\[\psi^{2}\(\frac{B\lambda\(\sqrt{h}X\)^{2}}{y}\)\leq 
4\frac{\left|p\(\sqrt{h}X\)-z\right|^{2}}{y^{2}}\psi^{2}\(\frac{B\lambda\(\sqrt{h}X\)^{2}}{y}\)
\lesssim 1. \]
We see that the condition on $V_{2}$ in \eqref{eq:vv't} is only needed in the region where $\lambda^{2}\lesssim y$.
Thus if we only consider values of $z$ such that $|z| - T\leq R$ for some $R>0$ we do not need this condition on $V_{2}$ to apply globally.
Instead we need there to exist some constant $L>0$ such that
\begin{equation}\label{eq:altcon}
|V_{2}\(x\)|-T\lesssim \left|V_{2}'\(x\)\right|^{2},\quad \forall x\in \left\{x\in \R^{n} : |V_{2}'\(x\)|\leq L \right\} .
\end{equation} 
Then by taking $B$ large enough (and hence $A$ small enough), we still get
\[\psi^{2}\(\frac{B\lambda\(\sqrt{h}X\)^{2}}{y}\)\leq 4\frac{\left|p\(\sqrt{h}X\)-z\right|^{2}}{y^{2}}\psi^{2}\(\frac{B\lambda\(\sqrt{h}X\)^{2}}{y}\)\lesssim 1. \]
The rest of the proof can remain unchanged. This results in the following.
\begin{theorem}
Let $p$ be in $C^{\infty}\(\R^{2n}\)$ be given by  $p=\xi^{2}+V\(x\)$ with $V=V_{1}+iV_{2}$, $V_{1}, V_{2}$ real valued, $V_{1}\geq 0$, 
$V''\in S\(1\)$, and
\[ V_{2}\(x\)-T \lesssim \left|V_{2}'\(x\)\right|^{2},\quad \forall x\in \left\{x\in \R^{n} : |V_{2}'\(x\)|\leq L \right\},  \]
for some $L>0$, $T\geq 0$.
Then for any $R>0$, $K>1$, there exist positive constants $A$, $M$, and $h_{0}$ such that for all $0<h\leq h_{0}$ and $z\in \C$ with $Mh\leq |z| -KT \leq R$ and $\rr z\leq Ah^{2/3}\(|z| -T\)^{1/3}$ we have
\[ \left\|\left[p^{w}\(x,hD_{x}\)-z\right]u\right\|_{L^2} \gtrsim h^{2/3}\(|z| -T\)^{1/3}\left\|u\right\|_{L^2},\quad \forall u\in \Sch\(\R^{n}\), \]
and taking $P$, the $L^2$-graph closure of $p^{w}$ on $\Sch$, we have
\[ \left\|\(P-z\)^{-1}u\right\|_{L^2}\lesssim h^{-2/3}\(|z|- T\)^{-1/3}\|u\|_{L^2},\quad \forall u\in L^{2}. \]
\end{theorem}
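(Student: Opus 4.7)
The plan is to re-run the proof of Theorem \ref{thm1} and observe that the global hypothesis \eqref{eq:vv't} is invoked in exactly one place, namely in deriving the inequality \eqref{eq:bval}; under the weaker hypothesis \eqref{eq:altcon} together with the additional constraint $|z|-T\leq R$, that single step can be replaced by a localized argument on the support of the cutoff $\Psi(X)=\psi(B\lambda(\sqrt{h}X)^{2}/y)$. Throughout the earlier portion of the proof of Theorem \ref{thm1}, up through \eqref{eq:weylineq}, no use is made of \eqref{eq:vv't}: the weight function from Corollary \ref{cor1} depends only on \eqref{eq:v1pos} and \eqref{eq:vbd}, and \eqref{eq:bval} enters only through \eqref{eq:ponpsi} to guarantee $|p(\sqrt{h}X)|-T\leq y/2$, hence $|p(\sqrt{h}X)-z|/y\geq 1/2$, which drives Lemma \ref{lemma3}.

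So the task reduces to verifying $|p(\sqrt{h}X)|-T\leq B\lambda(\sqrt{h}X)^{2}/4$ on the support of $\Psi$. On that support $\lambda(\sqrt{h}X)^{2}\leq 2y/B\leq 2R/B$, so in particular $|V_{2}'(\sqrt{h}x)|^{2}\leq 2R/B$. Taking $B$ large enough that $\sqrt{2R/B}\leq L$ places us in the region where \eqref{eq:altcon} applies, giving $|V_{2}(\sqrt{h}x)|-T\lesssim|V_{2}'(\sqrt{h}x)|^{2}\leq \lambda(\sqrt{h}X)^{2}$. Combining this with the trivial bound $|\xi|^{2}+V_{1}\leq \lambda^{2}$ yields $|p(\sqrt{h}X)|-T\lesssim \lambda(\sqrt{h}X)^{2}$ on the support of $\Psi$, and enlarging $B$ further (equivalently, shrinking $A$ via $B=1/(2AC_{1})^{3}$) makes the implicit constant at most $B/4$. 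From this point the proof of the a priori estimate closes exactly as before, and the passage to the resolvent estimate via the argument of Theorem \ref{thm2} applies verbatim, since that argument relies only on $\partial^{\alpha}V\in L^{\infty}$ for $|\alpha|\geq 2$ and never invokes \eqref{eq:vv't}.

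The main obstacle is purely bookkeeping: making sure the constants $A$, $B$, $M$, and $h_{0}$ can be chosen consistently under the new hypotheses. This is automatic: shrinking $A$ only tightens the restriction $\rr z\leq A h^{2/3}(|z|-T)^{1/3}$ in the conclusion, which is permissible, and the upper bound $R$ on $|z|-T$ is precisely what converts the localized assumption \eqref{eq:altcon} into the needed inequality on the support of $\Psi$. One should also note that the identification of $D_{max}$ as the domain of the graph closure, carried out in Section 5, does not use any form of the condition on $V_{2}$, so the resolvent part of the conclusion is immediate once the a priori estimate has been established.
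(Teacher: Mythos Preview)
Your proposal is correct and follows the paper's own argument essentially line for line: both isolate \eqref{eq:bval} as the unique place where \eqref{eq:vv't} is used, observe that on the support of $\Psi$ one has $\lambda^{2}\le 2y/B\le 2R/B$, and then choose $B$ large (equivalently $A$ small) so that the localized hypothesis \eqref{eq:altcon} yields \eqref{eq:ponpsi}, after which the proofs of Theorems~\ref{thm1} and~\ref{thm2} run unchanged. If anything, your write-up is slightly more explicit than the paper's about the bookkeeping of constants and about the fact that Section~5 never uses \eqref{eq:vv't}.
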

The set of potentials $V$ to which this can apply is very broad. Provided $V_{1}\geq 0$ and $V'' \in S\(1\)$, then \eqref{eq:altcon} will be satisfied for some $T$ and $L$
as long as there is no sequence of points $x_{j}$ along which $\left|V_{2}'\(x_{j}\)\right|\rightarrow 0$ and $V_{2}\(x_{j}\)\rightarrow \infty$.\\ \\
\textit{Acknowledgement.} I am very thankful to Michael Hitrik, who provided substantial guidance and input in the creation of this paper.

\end{document}